\crefname{section}{Section}{Sections}
\crefname{subsection}{\S}{\S\S}
\crefname{subsubsection}{\S}{\S\S}
\theoremstyle{plain}
\newtheorem{lemma}{Lemma}[section]
\newtheorem{proposition}[lemma]{Proposition}
\newtheorem{corollary}[lemma]{Corollary}
\newtheorem{theorem}[lemma]{Theorem}
\theoremstyle{plain}
\newtheorem{theoremN}{Theorem}
\theoremstyle{plain}
\newtheorem{definition}[lemma]{Definition}
\newtheorem{example}[lemma]{Example}
\newtheorem{remark}[lemma]{Remark}
\newtheorem{remarks}[lemma]{Remarks}
\crefname{definition}{definition}{definitions}
\crefname{ex}{example}{examples}
\crefname{exs}{example}{examples}
\crefname{remark}{remark}{remarks}
\crefname{remarks}{remark}{remarks}
\crefname{convention}{convention}{conventions}
\crefname{notation}{notation}{notations}
\crefname{table}{table}{tables}
\crefname{lemma}{lemma}{lemmas}
\crefname{proposition}{proposition}{propositions}
\crefname{propositionN}{proposition}{propositions}
\crefname{corollary}{corollary}{corollaries}
\crefname{corollaryN}{corollary}{corollaries}
\crefname{theorem}{theorem}{theorems}
\crefname{theoremN}{theorem}{theorems}
\crefname{enumi}{}{}
\crefname{assumption}{assumption}{Assumptions}
\crefname{construction}{construction}{Constructions}
\crefname{equation}{}{}
\numberwithin{equation}{section}
\renewcommand{\theequation}{\thesection-\arabic{equation}}
\theoremstyle{nonumberplain}
\newtheorem{proof}{Proof}
\newcommand\pf[1]{\newtheorem{#1}{Proof of \Cref{#1}}}
\newcommand\wh[1]{{\widehat{#1}}}
\newcommand\bC{{\mathbb C}}
\newcommand\bG{{\mathbb G}}
\newcommand\bR{{\mathbb R}}
\newcommand\bS{{\mathbb S}}
\newcommand\bZ{{\mathbb Z}}
\newcommand\cC{{\mathcal C}}
\newcommand\cE{{\mathcal E}}
\newcommand\cM{{\mathcal M}}
\newcommand\ol{\overline}
\DeclareMathOperator{\id}{id}
\DeclareMathOperator{\spn}{span}
\DeclareMathOperator{\wo}{\widehat{\otimes}}
\DeclareMathOperator{\rk}{\mathrm{rk}}
\newcommand\numberthis{\addtocounter{equation}{1}\tag{\theequation}}
\newcommand{\cat}[1]{\textsc{#1}}
\newcommand\spr[1]{\cite[\href{https://stacks.math.columbia.edu/tag/#1}{Tag {#1}}]{stacks-project}}
\newcommand{\qedhere}{\mbox{}\hfill\ensuremath{\blacksquare}}
\renewcommand{\square}{\mathrel{\Box}}
\newcommand{\xrightarrowdbl}[2][]{%
  \xrightarrow[#1]{#2}\mathrel{\mkern-14mu}\rightarrow
}
\title{Small Banach bundles and modules}
\author{Alexandru Chirvasitu}
\begin{document}

\date{}

\newcommand{\Addresses}{{
  \bigskip
  \footnotesize

  \textsc{Department of Mathematics, University at Buffalo}
  \par\nopagebreak
  \textsc{Buffalo, NY 14260-2900, USA}  
  \par\nopagebreak
  \textit{E-mail address}: \texttt{achirvas@buffalo.edu}


}}

\maketitle

\begin{abstract}
  We characterize those (continuously-normed) Banach bundles $\mathcal{E}\to X$ with compact Hausdorff base whose spaces $\Gamma(\mathcal{E})$ of global continuous sections are topologically finitely-generated over the function algebra $C(X)$, answering a question of I. Gogi\'c's and extending analogous work for metrizable $X$. Conditions equivalent to topological finite generation include: (a) the requirement that $\mathcal{E}$ be locally trivial and of finite type along locally closed and relatively $F_{\sigma}$ strata in a finite stratification of $X$; (b) the decomposability of arbitrary elements in $\ell^p(\Gamma(\mathcal{E}))$, $1\le p<\infty$ as sums of $\le N$ products in $\ell^p(C(X))\cdot \Gamma(\mathcal{E})$ for some fixed $N$; (c) the analogous decomposability requirement for maximal Banach-module tensor products $F\widehat{\otimes}_{C(X)}\Gamma(\mathcal{E})$ or (d) equivalently, only for $F=\ell^1(C(X))$.
\end{abstract}

\noindent {\em Key words:
  $F_{\sigma}$ set;
  $G_{\delta}$ set;
  $\ell^p$-sum;  
  Banach bundle;
  Banach module;
  Hilbert bundle;
  Hilbert module;
  convex module;
  finitely-generated;
  metric space;
  non-degenerate;
  paracompact;
  projective tensor product;
  section;
  semicontinuous;
  sheaf;
  stratification
}

\vspace{.5cm}

\noindent{MSC 2020: 46H25; 46J10; 46H10; 46M20; 54C40; 54D20; 54E35}


\section*{Introduction}

The initial motivation for the present note was the desire to apply the results of \cite{gog_top-fg} in the context of \cite{2402.08121v1}. The former gives in \cite[Theorems 1.1 and 3.6]{gog_top-fg}, for a {\it Hilbert bundle} \cite[Definition 13.5]{fd_bdl-1} $\cE\xrightarrowdbl{}X$ over a compact metrizable base space, a number of conditions equivalent to the topological finite generation of the section space $\Gamma(\cE)$ as a {\it Hilbert module} (\cite[p.4]{lnc_hilb}, \cite[Definition 15.1.5]{wo}) over the complex function $C^*$-algebra $C(X)$ (meaning, as is presumably guessable, that $\Gamma(\cE)$ has a finitely-generated dense purely algebraic $C(X)$-submodule). 

Metrizability is used crucially at several junctures in said results. The bundles and modules of relevance to \cite{2402.08121v1}, on the other hand, are over arbitrary compact (Hausdorff) base spaces, being attached to the initial data consisting of an action $\bG\times X\to X$ of a compact group on a compact Hausdorff space and an irreducible $\bG$-representation $\alpha$: in Hilbert-module guise, the object in question is the {\it $\alpha$-isotypic component} \cite[Definition 4.21]{hm5} $C(X)^{\alpha}$, regarded as a module over the invariant subalgebra $C(X)^{\cat{triv}}\cong C(X/\bG)$. The base space is thus (the possibly non-metrizable) $X/\bG$. 

In teasing out the precise role metrizability plays in characterizing topological finite generation, it transpires that the {\it Hilbert}-bundle/module framework can be expanded to accommodate {\it Banach} bundles (in the sense of \cite[Definition 13.4]{fd_bdl-1}) and modules respectively. This also ties in with \cite[Problem 3.11]{gog_top-fg}, which proposes extending the results under discussion, to whatever extent possible, to Banach modules attached to Banach bundles. The remainder of these introductory remarks elaborate on some of the specifics, taking some of the background for granted; more of it is recalled in the prefatory remarks in \Cref{se:smlbdl}. 

To fix ideas, Banach and Hilbert spaces are assumed complex. In the sequel, unqualified {\it Banach (Hilbert) bundles} over spaces $X$ as as in \cite[Definitions 13.4 and 13.5]{fd_bdl-1} (and \cite[\S\S 1 and 2]{fell_ext}, etc.):
\begin{itemize}[wide]
\item continuous open surjections $\cE\xrightarrowdbl{\pi}X$ (between typically Hausdorff spaces);
\item whose {\it fibers} $\cE_x:=\pi^{-1}(x)$, $x\in X$ are Banach (respectively Hilbert) spaces;

\item so that scaling, addition and the norm are continuous;
 
\item and such that
  \begin{equation*}
    V_{U,\varepsilon}:=\{z\in \cE\ |\ \pi(z)\in U\text{ and }\|z\|<\varepsilon\}
    ,\quad
    \varepsilon>0
    ,\quad
    \text{nbhd }U\ni x
  \end{equation*}
  form a system of neighborhoods around the origin of the Banach space $\cE_x$ for every $x\in X$. 
\end{itemize}

The following statement highlights only a few items in the main result of the paper (\Cref{th:gogicgen}), giving various alternative characterizations of compact-base Banach bundles whose section module is topologically finitely-generated. In reference to item \Cref{item:thn:gogicgen:alllp}, recall (\cite[Example 2.6]{ryan_ban}, \cite[\S 6]{dfs_tens-3}) the sequence spaces
\begin{equation}\label{eq:lpe}
  \ell^p(E)
  :=
  \left\{(v_n)_{n\in \bZ_{\ge 0}}\in E^{\aleph_0}\ \bigg|\ \|(v_n)\|_p<\infty\right\}
  ,\quad
  1\le p\le \infty
\end{equation}
with
\begin{equation*}
  \|(v_{n})\|_p
  :=
  \begin{cases}
    \left(\sum_{n}\|v_{n}\|^p\right)^{1/p} &\text{ for }1\le p<\infty\\
    \sup_{n}\|v_{n}\| &\text{ for }p=\infty
  \end{cases}
\end{equation*}
attached to a Banach space $E$. 

\begin{theoremN}\label{thn:gogicgen}
  For a Banach bundle $\cE\xrightarrowdbl{}X$ over a compact Hausdorff space the following conditions are equivalent.
  \begin{enumerate}[(a),wide]

  \item\label{item:thn:gogicgen:subhom} There is a finite upper bound on the fiber dimensions $\dim \cE_x$, $x\in X$, the restrictions
    \begin{equation*}
      \cE|_{X_d}
      ,\quad
      X_{d} = X_{\cE=d}:=\{x\in X\ |\ \dim\cE_x=d\} \subseteq X
    \end{equation*}
    are trivialized by finite open covers, and the (automatically open) sets
    \begin{equation}\label{eq:xgd:intro}
      X_{>d} = X_{\cE>d}:=\{x\in X\ |\ \dim\cE_x>d\} \subseteq X
    \end{equation}
    are $F_{\sigma}$ (i.e. \cite[Problem 3H]{wil_top} countable unions of closed subsets of $X$).
    
  \item\label{item:thn:gogicgen:topfg} The Banach $C(X)$-module $E:=\Gamma(\cE)$ is topologically finitely generated. 
    
  \item\label{item:thn:gogicgen:alllp} \Cref{eq:xgd:intro} are $F_{\sigma}$ and for every (or equivalently, some) $1\le p<\infty$ there is a $K=K_p\in \bZ_{\ge 0}$ such that every element of $\ell^p(E)$ is decomposable as
    \begin{equation*}
      \ell^p(E)
      \ni
      (v_n)_n
      =
      \sum_{i=1}^K (f_{ni})\cdot w_i
      ,\quad
      (f_{ni})_n\in \ell^p(A)
      ,\quad
      w_i\in E. 
    \end{equation*}
    \qedhere
  \end{enumerate}
\end{theoremN}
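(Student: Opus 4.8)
The plan is to establish the cycle (b) $\Rightarrow$ (a) $\Rightarrow$ (b) together with (b) $\Rightarrow$ (c) $\Rightarrow$ (a), so that the first pair is of a topological/geometric nature and the second incorporates the $F_\sigma$ clause of (c) (which I treat as given once (c) is assumed) plus the genuinely analytic $\ell^p$-decomposition. Throughout I would freely use three standard facts about a Banach bundle $\cE\to X$ over compact Hausdorff $X$: the evaluation $\Gamma(\cE)\to\cE_x$ and the restriction $\Gamma(\cE)\to\Gamma(\cE|_Z)$ along a closed $Z\subseteq X$ are surjective; $x\mapsto\dim\cE_x$ is lower semicontinuous, so the sets $X_{>d}$ are automatically open; and a Banach bundle of constant finite rank is automatically locally trivial, so the local-triviality requirement in (a) carries no content beyond the \emph{finiteness} of the trivializing covers of the strata $X_d$.

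For (b) $\Rightarrow$ (a), let $s_1,\dots,s_N$ topologically generate $E=\Gamma(\cE)$. Then $\mathrm{ev}_x$ maps the dense submodule $\sum_i C(X)s_i$ onto $\mathrm{span}_{\bC}(s_1(x),\dots,s_N(x))$, which is closed (finite-dimensional), hence equals $\cE_x$; so $\dim\cE_x\le N$ for all $x$. For the $F_\sigma$ clause, write $X_{>d}=\bigcup_{|I|=d+1}\{x:(s_i(x))_{i\in I}\text{ linearly independent}\}$, and observe that for a fixed $(d{+}1)$-tuple $I$ the function $x\mapsto\min_{\lambda\in\bC^{I},\,\|\lambda\|_\infty=1}\bigl\|\sum_{i\in I}\lambda_i s_i(x)\bigr\|$ is continuous (joint continuity of the bundle operations and the norm, plus a minimum over a compact parameter set), so each set in the union is the increasing countable union of the closed ``$\varepsilon$-independence'' loci where that minimum is $\ge\varepsilon$; thus $X_{>d}$ is $F_\sigma$. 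Finally, on a stratum $X_d$ the $N$ restricted sections still span every fibre, so the finitely many open sets $\{x\in X_d:(s_i(x))_{i\in I}\text{ a basis of }\cE_x\}$, $|I|=d$, cover $X_d$ and trivialize $\cE|_{X_d}$ via the corresponding frames.

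For (a) $\Rightarrow$ (b) I would induct on $M=\max_x\dim\cE_x$. The top stratum $U=X_{>M-1}$ is open, and being $F_\sigma$ it is a cozero set of $X$; its closed complement $Z=X_{\le M-1}$ carries an (a)-type bundle of smaller maximal rank, so by induction $\Gamma(\cE|_Z)$ is topologically finitely generated, and I lift such generators to $t_1,\dots,t_m\in\Gamma(\cE)$. Since $U$ is cozero in $X$ and paracompact, a subordinate partition of unity lets me replace the given finite trivializing cover of $U$ by one of the same size consisting of cozero sets $V_j=\{h_j>0\}$ of $X$ (cozero in $U$ plus $U$ cozero in $X$ gives cozero in $X$); normalizing the frame over $V_j$ to sections $\widehat{e}^{(j)}_i$ of norm $\le 1$, the products $s_{j,i}:=h_j\widehat{e}^{(j)}_i$ extend by $0$ to continuous global sections because $h_j\to 0$ towards the boundary of $V_j$. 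The claim is that $\{t_q\}\cup\{s_{j,i}\}$ topologically generate: given $\sigma$ and $\varepsilon$, subtract a $C(X)$-combination of the $t_q$ to make the remainder $\rho$ of norm $<\varepsilon$ on an open $W\supseteq Z$; then $L:=X\setminus W$ is compact in $U$, a cutoff $\chi\equiv1$ on $L$ with $\mathrm{supp}\,\chi$ compact in $U$ splits $\rho=\chi\rho+(1-\chi)\rho$ with $\|(1-\chi)\rho\|\le\varepsilon$, and $\chi\rho$ — supported in a compact subset of $U=\bigcup_j V_j$ — lies \emph{exactly} in $\sum C(X)s_{j,i}$ by expanding it in the trivializing frames and multiplying/dividing by $h_j$ against a partition of unity compactly supported in the $V_j$. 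This is exactly where the $F_\sigma$ hypothesis is essential: it is what makes the $V_j$ cozero in $X$, hence lets the $s_{j,i}$ be nonzero throughout $V_j$, whereas finitely many sections supported in \emph{compact} subsets of the $V_j$ could never reach all of the non-compact $U$.

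Finally, for (b) $\Leftrightarrow$ (c). Given (c) for some $p$, a finite sequence $\tau_1,\dots,\tau_{K+1}$ of sections with prescribed linearly independent values at a point $x$, together with a decomposition into at most $K$ terms evaluated at $x$, forces those values into a $\le K$-dimensional span, so $\dim\cE_x\le K$; and a compactness–exhaustion argument — legitimate because the $F_\sigma$ clause makes each stratum $\sigma$-compact — produces finite trivializing covers of the strata. Thus (a) holds and (b) follows. The substantive direction, and the one I expect to be the main obstacle, is (b) $\Rightarrow$ (c): one must upgrade ``dense finitely generated submodule'' to an \emph{exact} decomposition of every element of $\ell^p(E)$ with a bounded number of terms and $\ell^p$-controlled coefficients. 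When $\cE$ has constant rank over all of $X$ this is easy: $E=\Gamma(\cE)\cong pC(X)^r$ is projective, and $v_n=\sum_{a=1}^r (v_n)_a\,w_a$ with $w_a$ the columns of $p$ and $(v_n)_a\in C(X)$ the coordinates (of norm $\le\|v_n\|$) already works. In general I would induct on the maximal rank, splitting off the closed $Z=X_{\le M-1}$ as in (a) $\Rightarrow$ (b) and thereby reducing to sequences vanishing on $Z$, i.e.\ living over the cozero set $U=X_M$; the point is then to control coefficients \emph{as one approaches $Z$}, and this is where a telescoping device should enter. Replacing a single generator by a section such as $w=\bigl(\sum_m|v_m|\bigr)^{1/2}$ (or suitable fractional powers of the cozero witness $h$ of $U$), one gets, with $a_n=\|v_n\|$ arranged decreasing and $S_n=\sum_{m\ge n}a_m$, bounds of the form $\|v_n/w\|\lesssim a_n/\sqrt{S_n}$ whose sum telescopes against $\int_0^{S_1}t^{-1/2}\,dt<\infty$; the $p$-adapted analogues of this estimate are what force the decomposition to be exact with $\ell^p$-summable coefficients. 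Since the construction is performed for the given $p$, (b) yields (c) for \emph{every} $p$, which closes the cycle.
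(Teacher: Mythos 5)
Your overall architecture is sound, and two of the legs genuinely match the paper: your (b) $\Rightarrow$ (a) is essentially \Cref{le:bigdimfsigma} (continuity of the norm makes the $\varepsilon$-independence loci closed) combined with Gogi\'c's argument for subhomogeneity and conditional finite type, and your (a) $\Rightarrow$ (b) is a workable hands-on variant of the paper's route, which instead runs the extension \Cref{eq:extsect}, uses a partition of unity to exhibit $\Gamma_0(\cE|_{X_d})$ as a summand of $\bigoplus_i C_0(U_i)^d$, and reduces to the topological principality of the ideals $C_0(U_i)\triangleleft C(\overline{U_i})$. The two steps that carry the analytic content, however, are not established, and the one concrete mechanism you offer for the hardest of them is incorrect.

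For (b) $\Rightarrow$ (c), the divisor $w=\bigl(\sum_m|v_m|\bigr)^{1/2}$ does not satisfy $\|v_n/w\|\lesssim a_n/\sqrt{S_n}$: pointwise one only has $|v_n|/\bigl(\sum_m|v_m|\bigr)^{1/2}\le |v_n|^{1/2}$, and if the $v_n$ have disjoint supports with $\|v_n\|=n^{-2}$ then $\|v_n/w\|=n^{-1}$, so the coefficient sequence is not in $\ell^1(C(X))$ even though $(v_n)\in\ell^1(E)$. The trap is that the relevant norm on $\ell^p(C_0(U))$ is a \emph{sum of sup-norms}, not a sup of pointwise $\ell^p$-sums, so a divisor built pointwise from the values of the $v_n$ controls the wrong quantity. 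The telescoping idea itself is right, but the divisor must be adapted to the \emph{tail norms along an exhaustion}: this is exactly what \Cref{le:gogicgen-ideals} does, writing $Z=\bigcap_m U_m$ (possible only because of the $F_\sigma$/$G_\delta$ hypothesis) and choosing a Urysohn function $f$ with $f>\lambda_m$ on $\overline{U_m}$, where $\lambda_m$ is the $\ell^p$-norm of the restricted tail; then $(v_n/f)$ does land in $\ell^p(C_0(U))$. (Alternatively, Cohen--Hewitt factorization for the essential $C_0(U)$-module $\ell^p(C_0(U))$ yields the rank-one factorization directly.) Separately, in (c) $\Rightarrow$ (a) your ``compactness--exhaustion'' is underspecified: $\sigma$-compactness of a stratum only yields \emph{countable} trivializing covers, and to get finite ones you must concatenate, with summable weights, finite fiber-spanning families over an exhaustion into a single element of $\ell^p(E)$ and apply the $K$-term decomposition to extract $K$ global sections spanning every fiber of the stratum --- the step the paper delegates to \cite[Lemma 2.11]{gog_subhom-1} via the projective-rank corollary. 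Both gaps are fillable, but as written neither is a proof.
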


Borrowing the term from the sheaf (\cite[Definition 8.3.19]{ks_shv-mfld}, \cite[pre Definition 4.5.3]{htt_d}) or topological-stability \cite[Definition 8.1]{zbMATH06124312} literature, the partition
\begin{equation*}
  X=\coprod_{d} X_d
\end{equation*}
of \Cref{thn:gogicgen}\Cref{item:thn:gogicgen:subhom} might be referred to as a {\it stratification} of $X$ (though note that in those other contexts the assumptions on the {\it strata} $X_d$ are stronger: they are often assumed smooth or topological manifolds, smooth analytic/algebraic varieties, etc.); this explains the phrasing of the abstract above. Given the local triviality \cite[Chapter II, Exercise 37]{fd_bdl-1} of the restrictions $\cE|_{X_d}$ to the locally compact Hausdorff strata $X_d$, and once more taking a cue from sheaf-theoretic terminology (\cite[Definition 4.5.3]{htt_d}, \cite[\S 2.2.10]{bbd}, etc.), one might refer to $\cE$ as {\it constructible} with respect to the stratification $(X_d)$. 

As for the equivalence \Cref{item:thn:gogicgen:topfg} $\Leftrightarrow$ \Cref{item:thn:gogicgen:alllp}, it ties in both to \cite[Theorem 3.6, (i) $\Leftrightarrow$ (ii)]{gog_top-fg} for $p=1$, and, for $1<p<\infty$, to Grothendieck's characterization \cite[paragraph following Proposition 5, p.90 and Corollaire 2, p.103]{groth_clases-suites_repr} of finite-dimensional Banach spaces $E$ as those for which the canonical map
\begin{equation*}
  \begin{aligned}
    \ell^p&\wo E
    \xrightarrow{\quad}
    \ell^p(E)
    ,\quad
    \ell^p:=\ell^p(\bC)\\
    &\wo:=\text{the {\it projective tensor product (\cite[\S I.1.1, D\'efinition 2]{groth_pt_1963}, \cite[\S 2.1]{ryan_ban}, etc.)}}
  \end{aligned}  
\end{equation*}
is onto; see \Cref{res:notfs}\Cref{item:res:notfs:grothpar} below for details. 

\subsection*{Acknowledgements}

I am grateful for valuable input from I. Gogi{\'c}. This work is partially supported by NSF grant DMS-2001128.


\section{Smallness conditions for Banach bundles}\label{se:smlbdl}

The discussion on \cite[p.8]{dg_ban-bdl} terms the Banach bundles recalled above {\it (F) Banach bundles} for Fell \cite[\S 1]{fell_ext}, contrasting them with {\it (H)} Banach bundles (for Hofmann, in work \cite{zbMATH03539905,hk_shv-bdl} relating such objects to {\it sheaves} of Banach spaces): as above, except that norms are only required to be {\it upper semicontinuous} (i.e. \cite[Problem 7K]{wil_top} preimages of intervals $(-\infty,a)\subset \bR$ are open). We employ the same terminology here, occasionally using {\it (F)} and {\it (H)} as adjectives, perhaps supplemented by other phrases for variety (e.g. {\it continuous} for {\it (F)}). 

A {\it section} of a bundle $\cE\xrightarrow{\pi}X$ is a continuous right inverse $X\xrightarrow{\sigma}\cE$ to $\pi$. The possibly-discontinuous right inverses to $\pi$, on the other hand, are sometimes \cite[\S 1, p.8]{dg_ban-bdl} termed {\it selections}. 

The following simple observation, of use in the subsequent discussion, imposes constraints on the loci where fiber dimensions of a Banach bundle are large. 

\begin{lemma}\label{le:bigdimfsigma}
  Let $\cE\xrightarrow{\pi}X$ be an (H) Banach bundle and $X\xrightarrow{\sigma_i}\cE$ countably many continuous sections thereof. The subspace
  \begin{equation}\label{eq:xsigmai}
    X_{\{\sigma_i\}\ge r} := \left\{x\in X\ |\ \dim\spn\left\{\sigma_i(x)\right\}\ge r\right\} \subseteq X
    ,\quad
    r\in \bZ_{\ge 0}
    \end{equation}
    is
    \begin{itemize}[wide]
    \item $F_{\sigma}$ (i.e. \cite[Problem 3H]{wil_top} a countable union of closed subsets of $X$);

    \item and open $F_{\sigma}$ if $\cE$ is (F). 
    \end{itemize}
  \end{lemma}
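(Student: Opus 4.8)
The plan is to reduce to the case of finitely many sections and then exhibit their linear-independence locus as a superlevel set of an upper semicontinuous function assembled from the fiber norm.

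First I would note that $\dim\spn\{\sigma_i(x)\}\ge r$ holds exactly when some $r$-element subfamily $\sigma_{i_1}(x),\dots,\sigma_{i_r}(x)$ is linearly independent; since the index set is countable there are only countably many such subfamilies, so (assuming $r\ge 1$, the case $r=0$ being trivial) $X_{\{\sigma_i\}\ge r}$ is the countable union, over strictly increasing tuples $(i_1,\dots,i_r)$, of the loci
\[
  L(i_1,\dots,i_r):=\left\{x\in X\ \middle|\ \sigma_{i_1}(x),\dots,\sigma_{i_r}(x)\text{ linearly independent}\right\}.
\]
A countable union of $F_\sigma$ (resp.\ open) subsets is again $F_\sigma$ (resp.\ open), so it suffices to treat a single such locus; renaming, let $\tau_1,\dots,\tau_r$ be finitely many continuous sections and let $L$ denote their linear-independence locus.

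Next, for $c=(c_1,\dots,c_r)\in\bC^r$ the map $x\mapsto\sum_j c_j\tau_j(x)$ is again a continuous section, by iterating continuity of scalar multiplication and of fiberwise addition. Set
\[
  \phi(x):=\inf_{c\in S}\Bigl\|\,\textstyle\sum_j c_j\tau_j(x)\Bigr\|,\qquad S:=\left\{c\in\bC^r\ \middle|\ \|c\|_2=1\right\}.
\]
For fixed $x$ the function $c\mapsto\|\sum_j c_j\tau_j(x)\|$ is continuous on the compact sphere $S$, so the infimum is attained, and it is positive precisely when $\sum_j c_j\tau_j(x)\ne 0$ for every nonzero $c$, i.e.\ precisely when $\tau_1(x),\dots,\tau_r(x)$ are linearly independent; thus $L=\{x\in X\mid\phi(x)>0\}$.

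Finally I would invoke semicontinuity of the norm. For each fixed $c$, the function $x\mapsto\|\sum_j c_j\tau_j(x)\|$ is upper semicontinuous, being a continuous section followed by the upper semicontinuous fiber norm; hence $\phi$, an infimum of such functions, is upper semicontinuous, each $\{x\mid\phi(x)\ge 1/n\}$ is closed, and $L=\bigcup_{n\ge 1}\{x\mid\phi(x)\ge 1/n\}$ is $F_\sigma$. If $\cE$ is (F) the fiber norm is continuous, so $(x,c)\mapsto\|\sum_j c_j\tau_j(x)\|$ is jointly continuous on $X\times S$ and compactness of $S$ makes $\phi$ continuous, whence $L=\{\phi>0\}$ is moreover open. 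I expect the only real care to be in tracking the direction of semicontinuity — the fiber norm is merely upper semicontinuous in the (H) case, which is why it is the non-strict superlevel sets of $\phi$ that are the closed building blocks — and in using compactness of the finite-dimensional sphere $S$ both to attain the infimum defining $\phi$ and, in the (F) case, to upgrade pointwise positivity to openness; no deeper obstacle is anticipated.
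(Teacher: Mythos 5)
Your proposal is correct and follows essentially the same route as the paper: the same reduction to countably many $r$-element subfamilies and the same upper semicontinuous infimum over the unit sphere, whose superlevel sets give the closed pieces of the $F_\sigma$ decomposition. The only (inessential) difference is that you prove the openness in the (F) case directly via joint continuity and compactness of the sphere, where the paper simply cites the remark in Dixmier--Douady.
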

\begin{proof}
  The openness in the (F) case is noted in \cite[p.231, Remarque preceding \S 2]{dd}, and the $F_{\sigma}$ claim requires little more work (and none of any substance): write
  \begin{equation}\label{eq:infnormsemicont}
    \begin{aligned}
      X_{\{\sigma_i\}\ge r}
      &=
        \bigcup_{F\subseteq \{\sigma_i\},\ |F|=r} X_{F\ge r}
        \quad(\text{countable union}),\\
      X_{F\ge r}
      &:=
        \left\{x\in X\ \bigg|\ \inf_{(c_{\sigma})\in \bS^{2|F|-1}\subset \bC^{|F|}} \left\|\sum_{\sigma\in F}c_{\sigma}\sigma\right\|>0\right\}.
    \end{aligned}    
  \end{equation}
  The norm being upper semicontinuous by assumption, so is the infimum in \Cref{eq:infnormsemicont} and hence the preimage $X_{F\ge r}$ of $\bR_{>0}=\bigcup_{n\in \bZ>0}\left[\frac 1n,0\right)$ through that infimum is $F_{\sigma}$. 
\end{proof}

\begin{remarks}\label{res:skyscr}
  \begin{enumerate}[(1),wide]
  \item As the statement of \Cref{le:bigdimfsigma} suggests, there is no reason why \Cref{eq:xsigmai} would be open in general, for (H) bundles. For any compact Hausdorff $X$ and point $x$, for instance, there is an (H) Banach bundle $\cE$ on $X$ with 1-dimensional fiber $\cE_x$ and trivial fibers elsewhere: this is the bundle associated via \cite[Proposition 3.9]{zbMATH03539905} to the {\it skyscraper sheaf} (\spr{009A}, \cite[Exercise II.1.17]{hrt}) of Banach spaces on $X$ concentrated at $x$. For any section $\sigma$ that does not vanish at $x$ the locus $X_{\sigma\ge 1}=\{x\}$ is closed (so $F_{\sigma}$) but certainly not open.

  \item \cite[Example 3.14]{gog_top-fg}, of an (H) bundle whose fiber dimension jumps from 1 to 2 at a single exceptional point, is in the same spirit. In sheaf-theoretic language, it is the bundle associated through the same \cite[Proposition 3.9]{zbMATH03539905} to the {\it direct image} \cite[Definition 2.3.1]{ks_shv-mfld}
    \begin{equation*}
      \pi_*\cC([0,1])
      ,\quad
      \cC(\bullet):=\text{sheaf of continuous functions on }\bullet
    \end{equation*}
    through the quotient $[0,1]\xrightarrow{\pi}\bS^1$ identifying the two endpoints. 
  \end{enumerate}
\end{remarks}

For Banach spaces $E$ and $F$ we denote by $\wo$ the {\it projective (Banach) tensor product} of \cite[Definition A.3.67]{dales_autocont} (also \cite[\S 2.1]{ryan_ban}, \cite[\S 41.2]{k_tvs-2} and so on, with varying notation). If furthermore $E$ and $F$ are right and left {\it Banach modules} \cite[Definition 2.6.1]{dales_autocont} respectively over a Banach algebra $A$, $E\wo_AF$ denotes their projective tensor product \cite[pre Theorem II.3.5]{hlmsk_homolog} as such: 
\begin{equation*}
  E\wo_AF
  :=
  \text{Banach quotient }
  E\wo F/
  \overline{\spn}\left\{ea\otimes f-e\otimes af\ |\ a\in A,\ e\in E,\ f\in F\right\}. 
\end{equation*}

A Banach bundle $\cE\to X$ over a compact Hausdorff base $X$ provides a unital Banach $C(X)$-module $\Gamma(\cE)$ of sections thereof. Per \cite[Theorem 2.6]{dg_ban-bdl},
\begin{equation*}
  \bigg(\text{(F) Banach bundles on $X$}\bigg)
  \ni
  \cE
  \xmapsto{\qquad}
  \Gamma(\cE)
  \in
  \tensor[_{C(X)}]{\cM}{_{\cat{c}}}
  \quad
  \left(\text{`$\cat{c}$' for `convex'}\right)
\end{equation*}
is an equivalence between the category of (H) Banach bundles on $X$ and that of {\it convex} (unital Banach) $C(X)$-modules $M$ in the sense of \cite[discussion following Theorem 2.4]{dg_ban-bdl}:
\begin{equation*}
  \|fs+gt\|\le \max\left(\|s\|,\ \|t\|\right)
  ,\quad
  \forall\; 0\le f,g\in C(X)
  ,\quad
  f+g=1
  ,\quad
  s,t\in M.
\end{equation*}
We will henceforth switch perspective freely between bundles and modules. 

\Cref{def:projrk} is a minor gloss on \cite[Definition 3.3]{gog_top-fg}. Recall that a (left, say) Banach $A$-module $E$ is {\it non-degenerate} ({\it essential} in \cite[Definition 2.6.1]{dales_autocont}) if
\begin{equation*}
  \overline{AE}=E
  ,\quad
  AE:=\spn\left\{ae\ |\ a\in A,\ e\in E\right\}.
\end{equation*}
When $A$ is unital non-degeneracy is equivalent \cite[post Definition 2.6.1]{dales_autocont} to $E$ itself being unital, in the usual sense that $1e=e$ for all $e\in E$), which is the case we are mostly concerned with. 

\begin{definition}\label{def:projrk}
  Let $E$ and $F$ be non-degenerate left and right Banach $A$-modules.
  \begin{enumerate}[(1)]
  \item\label{item:def:projrk:el} The {\it ($A$-)projective rank} $\wh{\rk}(m)$ of an element $m\in F\wo_A E$ is the smallest $k$ (possibly $\infty$) for which
    \begin{equation*}
      m=\text{image of }\sum_{i=1}^k f_i\otimes e_i\in\text{ algebraic tensor product }F\otimes_A E.
    \end{equation*}
    
  \item\label{item:def:projrk:relmod} The {\it $F$-relative ($A$-)projective rank} of $E$ (possibly $\infty$) is
    \begin{equation*}
      \widehat{\rk}_F(E):=
      \inf\left\{k\in \bZ_{\ge 0}\ |\ \widehat{\rk}(m)\le k,\ \forall m\in F\wo_A E\right\}. 
    \end{equation*}

  \item\label{item:def:projrk:globmod} The {\it ($A$-)projective rank} of $E$ (possibly $\infty$) is
    \begin{equation*}
      \widehat{\rk}(E):=
      \sup_{\text{right non-degenerate }F}\widehat{\rk}_F(E). 
    \end{equation*}

  \item $E$ is {\it ($F$-relatively) of finite ($A$-)projective rank} if $\widehat{\rk}(E)<\infty$ (respectively $\widehat{\rk}_F(E)<\infty$).

  \item The mirror images of the definitions apply: one defines $E$-relative ($A$-)projective ranks for {\it right} modules $F$ in the obvious fashion, etc.
  \end{enumerate}
  For extra clarity, we might additionally decorate ranks with the algebra in question as a left-hand subscript, as in $\tensor[_A]{\wh{\rk}}{}(E)$. 
\end{definition}

Given Banach spaces $\left\{E_{\nu}\right\}_{\nu}$ and $1\le p\le \infty$ we write
\begin{equation*}
  \begin{aligned}
    \ell^p(\left\{E_{\nu}\right\})
    &:=
      \left\{(v_{\nu})_{\nu}\ \bigg|\ v_{\nu}\in E_{\nu}\quad\text{and}\quad\|(v_\nu)\|_p<\infty\right\}\\
    \|(v_\nu)\|_p
    &:=
      \begin{cases}
        \left(\sum_{\nu}\|v_{\nu}\|^p\right)^{1/p} &\text{ for }1\le p<\infty\\
        \sup_{\nu}\|v_{\nu}\| &\text{ for }p=\infty
      \end{cases}
  \end{aligned}  
\end{equation*}
for the {\it $\ell^p$-sum} \cite[\S 1.1, post Proposition 7]{hlmsk_fa} of the $E_{\nu}$. For countably infinite families of Banach spaces that happen to coincide with a single $E$ the corresponding $\ell^p$ sum specializes back to the $\ell^p(E)$ of \Cref{eq:lpe}, with plain $\ell^p$ short-hand for $\ell^p(\bC)$. Recall also (\cite[Example 2.6]{ryan_ban}, \cite[Proposition 5, p.90]{groth_clases-suites}) that $\ell^1(E) \cong \ell^1\wo E$ (for any Banach space $E$), so that 
\begin{equation}\label{eq:l1etens}
  \ell^1(E)
  \cong
  \ell^1\wo E
  \cong
  \left(\ell_1\wo A\right)\wo_{A}E
  \cong \ell^1(A)\wo_{A} E.
\end{equation}

\begin{remark}\label{re:notlp}
  In reference to \Cref{eq:l1etens}, recall (\cite[Proposition 4 and paragraph following Proposition 5]{groth_clases-suites_repr}; also \cite[Proposition 6.6]{dfs_tens-3}) that more generally, for $1\le p\le \infty$, the canonical map
  \begin{equation*}
    \ell^p\wo E
    \ni
    (x_n)_n\otimes\ v
    \xmapsto{\quad}
    (x_nv)_n
    \in
    \ell^p(E)
  \end{equation*}
  is a norm-1 embedding. For $1<p\le \infty$ that embedding is onto {\it precisely} when $E$ is finite-dimensional (\cite[Corollaire 2, p.103]{groth_clases-suites_repr} or \cite[Corollary 9.6]{dfs_tens-3} for $1<p<\infty$ and the already-cited \cite[paragraph post Proposition 5]{groth_clases-suites_repr} for $p=\infty$).
\end{remark}

The identification \Cref{eq:l1etens}, for the Banach algebra $A:=C_0(X)$ of continuous complex functions vanishing at infinity on locally compact Hausdorff $X$, shows that the hypothesis of \cite[Proposition 3.4]{gog_top-fg} for a non-degenerate Banach $C_0(X)$-module $E$ is nothing but the requirement that $E$ be $\ell^1(C_0(X))$-relatively of finite projective rank. This suggests a hierarchy of numerical invariants attached to a non-degenerate Banach $A$-module $E$:
\begin{equation}\label{eq:rks}
  \sup_{n\in \bZ_{\ge 0}} \widehat{\rk}_{\bC^n}(E)
  \quad\le\quad
  \widehat{\rk}_{\ell^1(A)}(E)
  \quad\le\quad
  \widehat{\rk}(E)
\end{equation}
(the first of which will turn up later: \Cref{re:walgfgcaution}). In this context, \cite[Proposition 3.4]{gog_top-fg} effectively proves that the last inequality is in fact an equality. That statement assumes $A=C_0(X)$ for locally compact Hausdorff $X$, but that assumption does not appear to be necessary. We recover that result in its full generality in \Cref{cor:allfin} below; the argument is essentially a rephrasing of the work just cited, presented, perhaps, so as to emphasize the substance of the matter. 

\Cref{re:notlp} also suggests a numerical invariant specific to $\ell^p(E)$ rather than tensor products.

\begin{definition}\label{def:lprk}
  Let $A$ be a Banach algebra, $E$ a non-degenerate left Banach $A$-module, and $1\le p\le\infty$.

  \begin{enumerate}[(1),wide]
  \item\label{item:def:lprk:el} The {\it ($A$-)$\ell^p$ rank} $\rk_p(m)$ of an element $m\in \ell^p(E)$ is the smallest $k$ (possibly $\infty$) for which
    \begin{equation*}
      m=\text{image of }\sum_{i=1}^k f_i\otimes e_i\in\text{ algebraic tensor product }\ell^p(A)\otimes_A E.
    \end{equation*}

  \item\label{item:def:lprk:relmod} The {\it ($A$-)$\ell^p$ rank} of $E$ (possibly $\infty$) is
    \begin{equation*}
      \rk_p(E):=
      \inf\left\{k\in \bZ_{\ge 0}\ |\ \rk_p(m)\le k,\ \forall m\in \ell^p(E)\right\}. 
    \end{equation*}
  \end{enumerate}
  As in \Cref{def:projrk}, the notation allows the specification of the algebra as a left subscript: $\tensor[_A]{\rk}{_p}(E)$, etc.
  
  By \Cref{eq:l1etens} we have $\rk_1(E)=\wh{\rk}_{\ell^1(A)}(E)$.
\end{definition}

\begin{remark}\label{re:rksrestrscalar}
  The various rank invariants in \Cref{def:projrk,def:lprk} play well with scalar restriction: if $B\le A$ is an embedding of Banach algebras and the non-degenerate $A$-module $E$ stays non-degenerate over $B$, then a straightforward unpacking of the definition shows that
  \begin{equation*}
    \tensor[_A]{rk}{_p}(E) \le \tensor[_B]{rk}{_p}(E);
  \end{equation*}
  the same goes for $\wh{\rk}$. 
\end{remark}

\begin{proposition}\label{pr:lprk}
  For a non-degenerate left Banach $A$-module $E$ we have
  \begin{equation*}
    \min_{1\le p\le \infty}\rk_p(E)
    =
    \rk_1(E)
    =
    \widehat{\rk}_{\ell^1(A)}(E)
    =
    \widehat{\rk}(E).
  \end{equation*}
\end{proposition}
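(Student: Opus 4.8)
The plan is to collapse the whole chain of equalities onto a single inequality. Three of the four relations are essentially free: $\rk_1(E)=\widehat{\rk}_{\ell^1(A)}(E)$ is the closing remark of \Cref{def:lprk} (via \Cref{eq:l1etens}); the inequality $\widehat{\rk}_{\ell^1(A)}(E)\le\widehat{\rk}(E)$ holds because $\ell^1(A)$ is one of the right modules over which the supremum defining $\widehat{\rk}(E)$ is taken (this uses that $\ell^1(A)$ is non-degenerate over $A$, automatic in the unital case that is our concern); and $\min_{1\le p\le\infty}\rk_p(E)\le\rk_1(E)$ is trivial. So everything will follow once I prove
\begin{equation*}
  \widehat{\rk}(E)\ \le\ \rk_p(E)\qquad\text{for every }1\le p\le\infty .
\end{equation*}
Indeed, the case $p=1$ then gives $\widehat{\rk}(E)\le\rk_1(E)=\widehat{\rk}_{\ell^1(A)}(E)\le\widehat{\rk}(E)$, forcing those three quantities to coincide; and for general $p$ it gives $\rk_p(E)\ge\widehat{\rk}(E)=\rk_1(E)$, so the minimum over $p$ is attained at $p=1$.

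To prove the displayed inequality I may assume $\rk_p(E)=k<\infty$, and I fix a non-degenerate right Banach $A$-module $F$ together with an element $m\in F\wo_A E$, aiming to exhibit $m$ as the image of a sum of at most $k$ elementary tensors. Start from any expansion $m=\sum_n f_n\otimes e_n$ with $\sum_n\|f_n\|\,\|e_n\|<\infty$, discarding the indices where $f_n=0$ or $e_n=0$. The crucial manoeuvre is a H\"older-type rescaling of the factors: the elementary split $\|f_n\|\,\|e_n\|=(\|f_n\|\,\|e_n\|)^{1/p}\cdot(\|f_n\|\,\|e_n\|)^{1/p'}$ shows that, replacing each $f_n\otimes e_n$ by $(\mu_n^{-1}f_n)\otimes(\mu_n e_n)$ for the positive scalar $\mu_n$ with $\mu_n\|e_n\|=(\|f_n\|\,\|e_n\|)^{1/p}$ — which changes neither $m$ nor the individual terms of the sum — we may assume $(e_n)_n\in\ell^p(E)$ and $(f_n)_n\in\ell^{p'}(F)$, where $p'$ is the conjugate exponent (for $p=1$, resp. $p=\infty$, the rescaling is just the normalization $\|f_n\|\le1$, resp. $\|e_n\|\le1$).

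Now apply the hypothesis $\rk_p(E)\le k$ to $(e_n)_n\in\ell^p(E)$: there are $w_1,\dots,w_k\in E$ and $(g_{i,n})_n\in\ell^p(A)$, $1\le i\le k$, with $e_n=\sum_{i=1}^k g_{i,n}w_i$ for all $n$. Substituting this into $m=\sum_n f_n\otimes e_n$ and using the defining relation $f\cdot a\otimes e=f\otimes a\cdot e$ of the module tensor product turns $m$ into the double series $m=\sum_n\sum_{i=1}^k(f_n g_{i,n})\otimes w_i$, which converges absolutely in $F\wo_A E$ because $\sum_n\|f_n g_{i,n}\|\le\|(f_n)_n\|_{p'}\,\|(g_{i,n})_n\|_p<\infty$ by H\"older's inequality. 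Absolute convergence licenses regrouping, and summing over $n$ first gives $m=\sum_{i=1}^k h_i\otimes w_i$ with $h_i:=\sum_n f_n g_{i,n}\in F$; hence $\widehat{\rk}(m)\le k$. Since $m$ and $F$ were arbitrary, $\widehat{\rk}(E)\le k=\rk_p(E)$, which is what was needed.

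I expect the only genuine idea here to be the H\"older rescaling that splits the single summability condition $\sum_n\|f_n\|\,\|e_n\|<\infty$ of a projective-tensor expansion into the two separate memberships $(e_n)_n\in\ell^p(E)$ and $(f_n)_n\in\ell^{p'}(F)$; once this is available, the remainder is the standard bookkeeping of absolutely convergent series in $F\wo_A E$, and there is no real obstacle. The case $p=1$ (where $p'=\infty$ and the rescaling degenerates to the normalization $\|f_n\|\le1$) is precisely the reformulation of \cite[Proposition 3.4]{gog_top-fg} promised in the introduction, so the content of the general statement is the observation that the very same argument goes through verbatim once the rescaling is allowed to depend on $p$.
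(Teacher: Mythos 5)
Your argument is correct and is essentially the paper's proof written out element-wise: the paper also reduces everything to the single inequality $\widehat{\rk}_F(E)\le\rk_p(E)$, establishes it by splitting a projective-tensor representation of $m$ into a pair in $\ell^{p'}(F)\times\ell^p(E)$ (citing \cite[Proposition 2.8]{ryan_ban} where you perform the H\"older rescaling by hand), and then contracts against the decomposition of $(e_n)_n$ via the pairing $\ell^{p'}(F)\times\ell^p(A)\to F$, exactly as in your regrouping of the absolutely convergent double series.
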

\begin{proof}
  The middle equality is already noted in \Cref{def:lprk}, and
  \begin{equation*}
    \widehat{\rk}_{\ell^1(A)}(E)
    \quad\le\quad
    \widehat{\rk}(E)
    ,\quad
    \inf_{1\le p\le\infty}\rk_p(E)
  \end{equation*}
  in any case (\Cref{eq:rks} for the first inequality; the second is tautological). It will thus suffice to show that for an arbitrary $1\le p\le\infty$ and right Banach $A$-module $F$ we have $\widehat{\rk}_F(E) \le \rk_p(E)$; fully expanded, the claim is that 
  \begin{equation*}
    \bZ_{\ge 0}\ni K\ge \rk_p(E)
    \quad
    \xRightarrow{\quad}
    \quad
    K\ge
    \widehat{\rk}_{F}(E)
    ,\quad\forall
    \text{ non-degenerate right $A$-module $F$}.
  \end{equation*}
  The desired conclusion says that the obvious {\it pairing} map
  \begin{equation}\label{eq:pairf}
    \begin{tikzpicture}[>=stealth,auto,baseline=(current  bounding  box.center)]
      \path[anchor=base] 
      (0,0) node (l) {$\left(F\otimes V^*\right)\times \left(V\otimes E\right)$}
      +(3,1) node (ul) {$F\otimes V^*\otimes V\otimes E$}
      +(8,1) node (ur) {$F\otimes E$}
      +(9,0) node (r) {$F\wo_A E$}
      ;
      \draw[->] (l) to[bend left=6] node[pos=.5,auto] {$\scriptstyle $} (ul);
      \draw[->] (ul) to[bend left=6] node[pos=.5,auto] {$\scriptstyle \id_F\otimes\cat{eval}\otimes \id_E$} (ur);
      \draw[->] (ur) to[bend left=6] node[pos=.5,auto,swap] {$\scriptstyle $} (r);
      \draw[->] (l) to[bend right=6] node[pos=.5,auto,swap] {$\scriptstyle \cat{pair}_F$} (r);
    \end{tikzpicture}
  \end{equation}
  is onto for $V:=\bC^K$, where unadorned `$\otimes$' denotes plain algebraic tensor products and $V^*\otimes V\xrightarrow{\cat{eval}}\bC$ is the guessable bilinear map. To check this, fit \Cref{eq:pairf} as the top right-hand map in the commutative diagram
  \begin{equation*}
    \begin{tikzpicture}[>=stealth,auto,baseline=(current  bounding  box.center)]
      \path[anchor=base] 
      (0,0) node (l) {$\ell^{p'}(F)\times \left(\ell^p(A)\otimes V^*\right)\times V\otimes E$}
      +(6,.5) node (u) {$F\otimes V^*\times V\otimes E$}
      +(6,-.5) node (d) {$\ell^{p'}(F)\times \ell^p(E)$}
      +(9,0) node (r) {$F\wo_AE$}
      ;
      \draw[->] (l) to[bend left=6] node[pos=.5,auto] {$\scriptstyle \square\times\id_{V\otimes E}$} (u);
      \draw[->] (u) to[bend left=6] node[pos=.5,auto] {$\scriptstyle \cat{pair}_F$} (r);
      \draw[->>] (l) to[bend right=6] node[pos=.5,auto] {$\scriptstyle $} node[pos=.5,auto,swap] {$\scriptstyle \id_{\ell^{p'}(F)}\times\cat{pair}_{\ell^p(A)}$} (d);
      \draw[->>] (d) to[bend right=6] node[pos=.5,auto,swap] {$\scriptstyle \left((f_n),(e_n)\right)\mapsto \sum_n f_n\otimes e_n$} (r);
    \end{tikzpicture}
  \end{equation*}
  where
  \begin{itemize}[wide]
  \item $p'$ is the {\it conjugate exponent} \cite[Definition 3.4]{rud_rc_3e_1987} to $p$: $\frac 1{p}+\frac 1{p'}=1$ (so that $1'=\infty$);
  \item the lower left-hand arrow is onto by assumption;
  \item the lower right-hand is onto by \cite[Proposition 2.8]{ryan_ban};
  \item and $\square$ pairs elements of $\ell^{p'}(F)$ and $\ell^p(A)$ by
    \begin{equation*}
      \ell^{p'}(F)\times \ell^p(A)
      \ni
      \left((f_n),\ (a_n)\right)
      \xmapsto{\quad}
      \sum_n f_n a_n
      \in
      F.
    \end{equation*}
  \end{itemize}
  The bottom composition being onto so is the top, hence the sought-for surjectivity of the top right hand. 
\end{proof}


An immediate consequence of \Cref{pr:lprk}:

\begin{corollary}\label{cor:allfin}
  For a non-degenerate left Banach $A$-module $E$ the following conditions are equivalent.
  \begin{enumerate}[(a),wide]
  \item $E$ is of finite projective rank.

  \item $E$ is $\ell^1(A)$-relatively of finite projective rank.

  \item $E$ has finite $\ell^p$ rank for some $1\le p\le\infty$.  \qedhere
  \end{enumerate}
\end{corollary}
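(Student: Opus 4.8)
The plan is to read the corollary off directly from \Cref{pr:lprk}. That proposition supplies the chain of equalities
\begin{equation*}
  \min_{1\le p\le \infty}\rk_p(E)
  =
  \rk_1(E)
  =
  \widehat{\rk}_{\ell^1(A)}(E)
  =
  \widehat{\rk}(E)
\end{equation*}
in $\bZ_{\ge 0}\cup\{\infty\}$, and, on unwinding \Cref{def:projrk,def:lprk}, conditions (a), (b), (c) assert respectively that $\widehat{\rk}(E)$ is finite, that $\widehat{\rk}_{\ell^1(A)}(E)$ is finite, and that $\rk_p(E)$ is finite for some $1\le p\le\infty$. Since an equality of extended non-negative integers transports finiteness, (a) $\Leftrightarrow$ (b) is immediate, and both are equivalent to the finiteness of $\min_{1\le p\le\infty}\rk_p(E)$.

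The only bookkeeping point I would spell out is the translation between "$\min_{1\le p\le\infty}\rk_p(E)<\infty$'' and the wording of (c), namely "$\rk_p(E)<\infty$ for some $1\le p\le\infty$''. One direction is trivial monotonicity of the minimum; for the other, \Cref{pr:lprk} shows the minimum is attained at $p=1$, so $\rk_1(E)\le\rk_p(E)$ for the witnessing $p$, whence $\widehat{\rk}(E)=\rk_1(E)<\infty$; and taking $p=1$ in (c) handles the remaining implication (a) $\Rightarrow$ (c).

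I do not anticipate any obstacle: the substance lies entirely in \Cref{pr:lprk} — and, upstream of it, in the surjectivity-of-pairing diagram chase of its proof and in the identification $\ell^1(E)\cong\ell^1\wo E$ behind \Cref{eq:l1etens}. The corollary itself amounts to no more than matching the numerical invariants of \Cref{def:projrk,def:lprk} to the three verbal conditions, so its proof is essentially a one-line pointer back to the proposition.
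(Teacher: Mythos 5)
Your argument is exactly the paper's: the corollary is stated there as an immediate consequence of \Cref{pr:lprk}, with no further proof given, and your unwinding of the chain of equalities (including the observation that the minimum is attained at $p=1$, so ``finite for some $p$'' forces $\rk_1(E)<\infty$) is precisely the intended bookkeeping. Correct as written.
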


\begin{remark}\label{re:walgfgcaution}
  {\it Weak algebraic finite generation} \cite[p.560, item (a)]{gog_top-fg}, the other finiteness notion pertinent to \cite[Theorem 3.6]{gog_top-fg}, can also be phrased in terms of relative projective ranks. Some caution is in order, as that property seems to be slightly misstated in loc. cit. as well as later, in \cite[Remark 3.5]{gog_top-fg}:
  \begin{itemize}[wide]
  \item On the one hand, the definition demands that every finitely-generated (henceforth {\it f.g.}) module be generated by $\le K$ elements for some constant $K$. 

  \item On the other, this is  more than what \cite[Remark 3.5]{gog_top-fg} yields. The latter shows that for some $K$, every finite set $\{v\}$ of elements in the module is in the $C(X)$-linear span of at most $K$ elements. These latter elements will not be, in general, again in the module generated by the original set $\{v\}$.
  \end{itemize}
  The right property to ask for, then, is that for some $K$ every f.g. module be {\it contained} in a module generated by $\le K$ elements. Minding this caveat, it is {\it this} that we refer to as being {\it weakly (algebraically) f.g.} in the sequel: in the language of \Cref{def:projrk}\Cref{item:def:projrk:relmod}, the requirement that
  \begin{equation}\label{eq:realwk}
    \sup_{n\in \bZ_{\ge 0}} \widehat{\rk}_{\bC^n}(E)<\infty.    
  \end{equation}  
\end{remark}


Some of the discrepancy between the two ``competing'' notions of weak algebraic finite generation is already visible in the example discussed in \cite[\S 1, 2$^{nd}$ paragraph]{gog_top-fg}. 

\begin{example}\label{ex:weakfgwrong}
  The Hilbert (hence also Banach) module will be the ideal
  \begin{equation*}
    C_0((0,1])\triangleleft C([0,1])
  \end{equation*}
  over the latter $C^*$-algebra. Loc. cit. notes that the module is topologically {\it cyclic} (i.e. generated by a single element); for that reason, the supremum \Cref{eq:realwk} is 1. On the other hand, it is certainly {\it not} the case that every f.g. ideal $I\le C_0((0,1])$ of $C([0,1])$ is principal. This follows, for example, from \cite[Theorem 2.3]{gh_fg-2}, which ensures that if $f\in C_0((0,1])$ is any piecewise linear function that takes both positive and negative values then $(f,|f|)$ cannot be principal. 
\end{example}

Following \cite[\S 2]{gog_top-fg} with minor alterations, call an (F) Banach bundle over $X$
\begin{itemize}[wide]
\item {\it ($n$-)homogeneous} if $\dim\cE_x$ is constant (respectively equal to $n\in \bZ_{\ge 0}$) for all $x\in X$;
\item {\it ($n$-)subhomogeneous} if $\dim\cE_x$ is bounded (respectively by $n\in \bZ_{\ge 0}$) for all $x\in X$;
\item {\it of finite type (or f.t.)} if $X$ admits a finite open cover $X=\bigcup_i V_i$ with all $\cE|_{V_i}$ trivial;
\item and {\it conditionally of finite type (conditionally f.t.)} if the restriction of $\cE$ to any subset where its fibers are equidimensional is of finite type. 
\end{itemize}


\begin{theorem}\label{th:gogicgen}    
  Let $X$ be a compact Hausdorff space and $\cE$ an (F) Banach bundle over $X$. The following conditions are equivalent:
  \begin{enumerate}[(a),wide]
  \item\label{item:th:gogicgen:subhom} $\cE$ is subhomogeneous and conditionally f.t., and the (automatically open) sets
    \begin{equation}\label{eq:xgd}
      X_{>d} = X_{\cE>d}:=\{x\in X\ |\ \dim\cE_x>d\} \subseteq X
    \end{equation}
    are $F_{\sigma}$. 
    
  \item\label{item:th:gogicgen:topfg} The Banach $C(X)$-module $E:=\Gamma(\cE)$ is topologically finitely generated.

  \item\label{item:th:gogicgen:alllp} \Cref{eq:xgd} are $F_{\sigma}$ and $E$ has finite $\ell^p$ rank for all $1\le p<\infty$.
    
  \item\label{item:th:gogicgen:somelp} \Cref{eq:xgd} are $F_{\sigma}$ and $E$ has finite $\ell^p$ rank for some $1\le p<\infty$. 
    
  \item\label{item:th:gogicgen:l1proj} \Cref{eq:xgd} are $F_{\sigma}$ and $E$ is $\ell^1(C(X))$-relatively of finite $C(X)$-projective rank.

  \item\label{item:th:gogicgen:proj} \Cref{eq:xgd} are $F_{\sigma}$ and $E$ is of finite $C(X)$-projective rank.

  \item\label{item:th:gogicgen:walgfg} \Cref{eq:xgd} are $F_{\sigma}$ and $E$ is weakly algebraically f.g. (in the sense of \Cref{eq:realwk}). 
  \end{enumerate}
\end{theorem}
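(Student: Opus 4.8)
The plan is to run the cycle $\Cref{item:th:gogicgen:subhom}\Rightarrow\Cref{item:th:gogicgen:alllp}\Rightarrow\Cref{item:th:gogicgen:somelp}\Rightarrow\Cref{item:th:gogicgen:l1proj}\Rightarrow\Cref{item:th:gogicgen:topfg}\Rightarrow\Cref{item:th:gogicgen:subhom}$, supplemented with $\Cref{item:th:gogicgen:l1proj}\Rightarrow\Cref{item:th:gogicgen:proj}\Rightarrow\Cref{item:th:gogicgen:walgfg}\Rightarrow\Cref{item:th:gogicgen:subhom}$, so that every clause is reached. Almost all of the links are immediate from the earlier material: $\Cref{item:th:gogicgen:alllp}\Rightarrow\Cref{item:th:gogicgen:somelp}$ is trivial, $\Cref{item:th:gogicgen:somelp}\Rightarrow\Cref{item:th:gogicgen:l1proj}$ and $\Cref{item:th:gogicgen:l1proj}\Rightarrow\Cref{item:th:gogicgen:proj}$ are \Cref{pr:lprk} and \Cref{cor:allfin}, and $\Cref{item:th:gogicgen:proj}\Rightarrow\Cref{item:th:gogicgen:walgfg}$ is the leftmost inequality in \Cref{eq:rks} — the ``$X_{>d}$ is $F_{\sigma}$'' clause, common to source and target throughout, being simply carried along. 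For $\Cref{item:th:gogicgen:l1proj}\Rightarrow\Cref{item:th:gogicgen:topfg}$: since $\rk_1(E)=\widehat{\rk}_{\ell^1(C(X))}(E)<\infty$, scaling a countable dense subset of $E$ by a summable sequence of positive reals produces an element of $\ell^1(E)$, and a bounded-length decomposition of it exhibits a finitely generated dense $C(X)$-submodule. And $\Cref{item:th:gogicgen:topfg}\Rightarrow\Cref{item:th:gogicgen:subhom}$ is the easy geometric direction: if $v_1,\dots,v_n$ generate a dense submodule then, evaluation maps $E\to\cE_x$ being onto, $\cE_x=\spn\{v_i(x)\}$, so $\dim\cE_x\le n$; a restriction of $\cE$ to any equidimensional locus is spanned by $n$ sections, hence trivialized by the $\le\binom nd$ open sets on which a fixed $d$-subset is a frame; and $X_{>d}=X_{\{v_i\}\ge d+1}$ is $F_{\sigma}$ by \Cref{le:bigdimfsigma}.

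The one implication carrying real content is $\Cref{item:th:gogicgen:subhom}\Rightarrow\Cref{item:th:gogicgen:alllp}$, which I would prove by induction on $N:=\sup_x\dim\cE_x$ (the case $N=0$ being vacuous). For the inductive step let $X_N=X_{>N-1}$, which is open, and $Z:=X\setminus X_N=X_{\le N-1}$, which is closed. Restriction is a bounded surjection $E=\Gamma(\cE)\twoheadrightarrow\Gamma(\cE|_Z)$ — sections of an (F) bundle extend off the closed subset $Z$ of the compact space $X$ — and, fibre norms being continuous and $X$ compact, its kernel is exactly $\Gamma_Z\cong\Gamma_0(\cE|_{X_N})$, the sections of $\cE|_{X_N}$ vanishing at infinity. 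Here $\cE|_Z$ again satisfies \Cref{item:th:gogicgen:subhom} (subhomogeneity and conditional finite type restrict, and $Z\cap X_{>d}$ stays $F_{\sigma}$), so $\Gamma(\cE|_Z)$ has finite $\ell^p$ rank by induction. For $\Gamma_Z$: since $X_N$ is equidimensional, $\cE|_{X_N}$ is of finite type, and since $X_N=X_{>N-1}$ is $F_{\sigma}$ by hypothesis it is $\sigma$-compact (this is where that clause is used), which realizes $\Gamma_Z$ as a complemented $C_0(X_N)$-submodule of a finite free module $C_0(X_N)^k$; Cohen's factorization theorem, applied coordinatewise, gives $\rk_p\bigl(C_0(X_N)^k\bigr)\le k$ for every $p$, this passes to complemented submodules by pushing decompositions through the ($C_0(X_N)$-linear) retraction, and then to the larger algebra $C(X)\supseteq C_0(X_N)$ by \Cref{re:rksrestrscalar}, whence $\rk_p(\Gamma_Z)\le k$ over $C(X)$. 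Finally $\rk_p$ is subadditive along $0\to\Gamma_Z\to E\to\Gamma(\cE|_Z)\to 0$: lifting a length-$a$ decomposition of the image of a sequence in $\ell^p(E)$, with the accompanying scalar functions extended off $Z$ norm-preservingly so as to remain in $\ell^p(C(X))$, leaves a residual sequence valued in $\Gamma_Z$, costing $\le k$ further terms. So $\rk_p(E)<\infty$ uniformly in $p$, while the ``$F_{\sigma}$'' clause is inherited from \Cref{item:th:gogicgen:subhom}.

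To close the loop I would prove $\Cref{item:th:gogicgen:walgfg}\Rightarrow\Cref{item:th:gogicgen:subhom}$. Subhomogeneity is immediate: were $\dim\cE_{x_0}$ unbounded, sections spanning a given $\cE_{x_0}$ would lie in a $K$-generated submodule, forcing $\dim\cE_{x_0}\le K$. Conditional finite type is the delicate part: given an equidimensional $X_d$, the $F_{\sigma}$ hypothesis lets me write $X_d=\bigcup_n C_n$ with $C_n$ compact and increasing; on each $C_n$ the bundle is locally trivial of constant rank, hence of finite type, so $\Gamma(\cE|_{C_n})$ is finitely generated, and weak algebraic finite generation produces, \emph{uniformly in $n$}, a $K$-element family $v^n_1,\dots,v^n_K$ of global sections spanning all fibres of $\cE$ over $C_n$. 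An odd/even superposition — sections $V^{\mathrm{odd}}_i:=\sum_{n\text{ odd}}\varepsilon_n\rho_n v^n_i$ and their even counterparts, with $\{\rho_n\}$ a partition of unity along the ``shells'' of the exhaustion (supports compact in $X_d$ and disjoint within each parity) and $\varepsilon_n\searrow 0$ chosen so fast that each series is continuous on $X$ — then yields $\le 2K$ global sections spanning every fibre of $\cE$ over $X_d$, so $\cE|_{X_d}$ is of finite type. This establishes \Cref{item:th:gogicgen:subhom}.

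I expect the genuine difficulties to be twofold. The first is the structural input for $\Cref{item:th:gogicgen:subhom}\Rightarrow\Cref{item:th:gogicgen:alllp}$, that the section module of a finite-type Banach bundle over a $\sigma$-compact locally compact base is a complemented \emph{Banach} $C_0$-submodule of a finite free module — the subtle point being that local trivializations of $\cE|_{X_N}$ must be chosen so as not to degenerate at the frontier with the lower strata, so that the identification respects the native Banach structure. The second is the gluing/scaling bookkeeping in $\Cref{item:th:gogicgen:walgfg}\Rightarrow\Cref{item:th:gogicgen:subhom}$ that reassembles a uniformly bounded generating family out of the local ones; throughout one leans on the standard facts that (F) Banach bundles over compact Hausdorff spaces have enough sections and admit section extension off closed subsets, and that constant-rank such bundles are locally trivial.
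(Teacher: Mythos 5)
The one link in your graph that lands on \Cref{item:th:gogicgen:topfg} is \Cref{item:th:gogicgen:l1proj} $\Rightarrow$ \Cref{item:th:gogicgen:topfg}, and that argument has a genuine gap: it begins by ``scaling a countable dense subset of $E$ by a summable sequence'', i.e.\ it assumes $\Gamma(\cE)$ is separable. That is exactly what fails in the non-metrizable setting this theorem is meant to cover (already for the trivial line bundle over $[0,\Omega]$ the module $C(X)$ is non-separable yet topologically singly generated), so you have imported the metrizable-case argument wholesale at the one point where metrizability actually bites. Since every other arrow only leaves \Cref{item:th:gogicgen:topfg} (via your \Cref{item:th:gogicgen:topfg} $\Rightarrow$ \Cref{item:th:gogicgen:subhom}), your directed graph is not strongly connected: conditions \Cref{item:th:gogicgen:subhom}, \Cref{item:th:gogicgen:alllp}--\Cref{item:th:gogicgen:walgfg} end up mutually equivalent and implied by \Cref{item:th:gogicgen:topfg}, but nothing proves \Cref{item:th:gogicgen:topfg}. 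The repair is to run your own induction along the extension \Cref{eq:extsect} a second time for topological finite generation, which (like finite $\ell^p$ rank) passes through extensions and module retracts; the base case is that $C_0(U_i)\triangleleft C(\overline{U_i})$ is topologically generated by a single function vanishing precisely on the closed $G_{\delta}$ set $\overline{U_i}\setminus U_i$. This is what the paper does, proving \Cref{item:th:gogicgen:subhom} $\Rightarrow$ \Cref{item:th:gogicgen:topfg} and \Cref{item:th:gogicgen:subhom} $\Rightarrow$ \Cref{item:th:gogicgen:alllp} in parallel.

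Apart from this, your skeleton matches the paper's: the same extension \Cref{eq:extsect}, the same use of paracompactness/$\sigma$-compactness of the top stratum to split $\Gamma_0(\cE|_{X_N})$ off a finite free module, and the same formal links among \Cref{item:th:gogicgen:alllp}--\Cref{item:th:gogicgen:walgfg} via \Cref{pr:lprk}, \Cref{cor:allfin} and \Cref{eq:rks}. Two of your choices are genuinely different and defensible: Cohen factorization replaces the explicit Urysohn construction of \Cref{le:gogicgen-ideals} (and, consistently with \Cref{ex:omega}, shows the rank bound for $p<\infty$ needs no $G_{\delta}$ hypothesis at all, only the restriction-of-scalars step does any work); and you prove \Cref{item:th:gogicgen:walgfg} $\Rightarrow$ \Cref{item:th:gogicgen:subhom} directly by an odd/even shell superposition where the paper instead cites \cite[Theorem 3.6, (iv) $\Rightarrow$ (i)]{gog_top-fg} to get \Cref{item:th:gogicgen:walgfg} $\Rightarrow$ \Cref{item:th:gogicgen:topfg}. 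The superposition argument is plausible but needs the bookkeeping spelled out (disjointness of same-parity supports, the inclusion of $\mathrm{supp}\,\rho_n$ in a set over which $v^n_1,\dots,v^n_K$ actually span, and continuity of the sums at frontier points of $X_d$); note also that once the separability gap is fixed this whole implication becomes redundant, since \Cref{item:th:gogicgen:topfg} $\Rightarrow$ \Cref{item:th:gogicgen:subhom} already closes the loop.
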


Before going into the proof, we record the following consequence; it is an immediate application of \Cref{th:gogicgen} to Banach bundles with 1-dimensional fiber over an open locus and vanishing over a closed subspace (so subhomogeneous of maximal fiber dimension $\le 1$).

\begin{corollary}\label{cor:zeroloci}
  Let $Z\subseteq Y$ be a closed embedding of compact Hausdorff spaces.

  For the ideal $C_0(Y\setminus Z) \trianglelefteq C(Y)$ regarded as a $C(Y)$-module the conditions \Cref{item:th:gogicgen:topfg} to \Cref{item:th:gogicgen:walgfg} of \Cref{th:gogicgen} are equivalent, and met precisely when $Z$ is $G_{\delta}$ in $Y$ (i.e. \cite[Problem 3H]{wil_top} a countable intersection of open subsets of $Y$).  \qedhere
\end{corollary}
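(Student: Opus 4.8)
The plan is to realize the ideal $I:=C_0(Y\setminus Z)$ --- identified as usual, via extension by zero, with $\{f\in C(Y)\ :\ f|_Z=0\}\trianglelefteq C(Y)$ --- as the section module $\Gamma(\cE)$ of a single (F) Banach bundle $\cE$ over $Y$ whose fibre is $\bC$ over each point of $Y\setminus Z$ and $0$ over each point of $Z$. Granting this, the corollary drops straight out of \Cref{th:gogicgen} applied to $\cE$, once one unwinds what its condition \Cref{item:th:gogicgen:subhom} asserts in this situation.

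To build $\cE$ I would take the subset $\bigl((Y\setminus Z)\times\bC\bigr)\cup\bigl(Z\times\{0\}\bigr)$ of $Y\times\bC$, with $\pi$ the first projection, and topologize it by the standard construction of a Banach bundle from a sufficient space of selections (\cite[Chapter II]{fd_bdl-1}, \cite[\S 1]{fell_ext}): the selections $x\mapsto(x,f(x))$, $f\in I$, span each fibre densely (onto $\bC$ over $Y\setminus Z$ by Urysohn's lemma, onto $0$ over $Z$), and their fibrewise norms $x\mapsto|f(x)|$ are continuous on $Y$, so there is a unique (F) Banach bundle topology on $\cE$ making all of them continuous sections. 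Since by construction any section of $\cE$ is a continuous map $Y\to\bC$ vanishing on $Z$, this gives $\Gamma(\cE)=I$ as $C(Y)$-modules; confirming this identification (equality, not just containment) is the one step I expect to need genuine care.

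Next I would check that the remaining hypotheses of \Cref{item:th:gogicgen:subhom} are automatic for $\cE$. It is $1$-subhomogeneous by construction, with $X_{\cE>0}=Y\setminus Z$ and $X_{\cE>d}=\varnothing$ for $d\ge1$. It is conditionally of finite type: over the stratum $Z$ it is the zero bundle, hence trivial; over the stratum $Y\setminus Z$ it is the trivial line bundle, trivialized globally by the nowhere-vanishing section $x\mapsto(x,1)$, which is continuous because near any $x_0\notin Z$ Urysohn's lemma produces $g\in I$ equal to $1$ on a neighbourhood of $x_0$, so that this section locally coincides with the continuous section $g$.

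With $\cE$ in hand, \Cref{th:gogicgen} gives that \Cref{item:th:gogicgen:topfg} to \Cref{item:th:gogicgen:walgfg} are equivalent for $I=\Gamma(\cE)$ and hold iff \Cref{item:th:gogicgen:subhom} holds for $\cE$; by the previous paragraph the latter reduces to the single requirement that $X_{\cE>0}=Y\setminus Z$ be $F_\sigma$ in $Y$ --- since $X_{\cE>d}=\varnothing$ is trivially $F_\sigma$ for $d\ge1$ --- equivalently that $Z$ be $G_\delta$ in $Y$ (\cite[Problem 3H]{wil_top}). That is exactly the content of the corollary. The only real work sits in the bundle construction of the second paragraph; everything after it is bookkeeping on top of \Cref{th:gogicgen}.
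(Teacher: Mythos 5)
Your proposal is correct and follows essentially the same route as the paper, which treats the corollary as an immediate application of \Cref{th:gogicgen} to the rank-$\le 1$ bundle with fibre $\bC$ over $Y\setminus Z$ and $0$ over $Z$ (obtainable either via your explicit Fell--Doran sufficient-selections construction or via the bundle/convex-module equivalence of Dupr\'e--Gillette already invoked in \Cref{se:smlbdl}). The details you flag --- the identification $\Gamma(\cE)=C_0(Y\setminus Z)$ and the conditional finite type of $\cE$ --- check out exactly as you describe.
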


It will be convenient to isolate part of \Cref{cor:zeroloci} (in a slightly sharper form) for later use in the proof of \Cref{th:gogicgen}.

\begin{lemma}\label{le:gogicgen-ideals}
  Let $Z\subseteq Y$ be a closed $G_{\delta}$ subspace of a compact Hausdorff space. Setting $U:=Y\setminus Z$, we have
  \begin{equation}\label{eq:le:gogicgen-ideals:scalarrestr}
    \tensor[_{C_0(U)}]{rk}{_p}(C_0(U)) \le 1
    \xRightarrow{\quad}
    \tensor[_{C(X)}]{rk}{_p}(C_0(U)) \le 1
    ,\quad
    \forall 1\le p<\infty. 
  \end{equation}
\end{lemma}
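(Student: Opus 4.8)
The plan is to view \Cref{eq:le:gogicgen-ideals:scalarrestr} as a case of the behaviour of $\ell^p$-rank under \emph{enlarging} the scalar algebra, i.e.\ of \Cref{re:rksrestrscalar} applied to the embedding $C_0(U)\hookrightarrow C(Y)$. This inclusion realizes $C_0(U)$ as the closed ideal $\{f\in C(Y)\mid f|_Z=0\}$ and is \emph{isometric}: a function $g\in C_0(U)$, extended by $0$ across $Z$, is continuous on $Y$ (its superlevel sets $\{|g|\ge\varepsilon\}$, $\varepsilon>0$, are compact in $U$, hence closed in $Y$, so the sublevel sets are open neighbourhoods of $Z$), and $\|g\|_{C(Y)}=\sup_Y|g|=\sup_U|g|=\|g\|_{C_0(U)}$. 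Since $C_0(U)$, regarded as a module over itself, is non-degenerate (any $C^*$-algebra has a bounded approximate identity) and is in addition unital — hence non-degenerate — over $C(Y)$, \Cref{re:rksrestrscalar} with $B:=C_0(U)\le A:=C(Y)$ yields
\begin{equation*}
  \tensor[_{C(Y)}]{\rk}{_p}(C_0(U))\ \le\ \tensor[_{C_0(U)}]{\rk}{_p}(C_0(U))\ \le\ 1,
\end{equation*}
which is the assertion (reading the ``$C(X)$'' of \Cref{eq:le:gogicgen-ideals:scalarrestr} as $C(Y)$; if the lemma is to be applied with $Y$ a closed subspace of a larger compact Hausdorff $X$, note that the $C(X)$-action on $C_0(U)$ factors through the surjection $C(X)\twoheadrightarrow C(Y)$, so $\tensor[_{C(X)}]{\rk}{_p}(C_0(U))=\tensor[_{C(Y)}]{\rk}{_p}(C_0(U))$ by another, trivial, instance of scalar restriction).

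If one prefers the constant $1$ preserved explicitly rather than quoting \Cref{re:rksrestrscalar}, the same is visible directly from \Cref{def:lprk}: given $m=(g_n)_n\in\ell^p(C_0(U))$, the hypothesis produces $e\in C_0(U)$ and $(h_n)_n\in\ell^p(C_0(U))$ with $g_n=h_ne$ for all $n$; by the isometry above $(h_n)_n$ lies in $\ell^p(C(Y))$ with the same norm, and the single tensor $(h_n)_n\otimes e\in\ell^p(C(Y))\otimes_{C(Y)}C_0(U)$ already maps onto $(h_ne)_n=m$, so $\rk_p(m)\le1$ over $C(Y)$ too.

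I do not anticipate any real obstacle: modulo the definitions and \Cref{re:rksrestrscalar} this is bookkeeping. The two points deserving a moment's care are (i) the \emph{isometry} of $C_0(U)\hookrightarrow C(Y)$ — this is exactly what keeps an $\ell^p$-summable tuple over $C_0(U)$ $\ell^p$-summable over $C(Y)$, with the same witnessing tensor — and (ii) the cosmetic identification of $C(X)$ with $C(Y)$ (or the passage along $C(X)\twoheadrightarrow C(Y)$). It is worth flagging that the $G_\delta$ hypothesis on $Z$ is not used in the implication itself; it is recorded because, by \Cref{cor:zeroloci}, it is precisely the situation in which the antecedent $\tensor[_{C_0(U)}]{\rk}{_p}(C_0(U))\le1$ holds, and it is in that regime that the lemma is invoked in the proof of \Cref{th:gogicgen}.
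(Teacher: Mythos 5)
Your proof of the literal implication in \Cref{eq:le:gogicgen-ideals:scalarrestr} is correct, and it coincides with what the paper does for that step: the final sentence of the paper's proof disposes of the implication exactly as you do, by invoking \Cref{re:rksrestrscalar} for the isometric embedding $C_0(U)\hookrightarrow C(Y)$ (and you are right both that the ``$C(X)$'' in the statement should be read as $C(Y)$ and that the $G_\delta$ hypothesis plays no role in this step).

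The discrepancy is that the implication is only the trivial coda of the paper's proof; almost all of its length goes into proving the \emph{antecedent} $\tensor[_{C_0(U)}]{\rk}{_p}(C_0(U))\le 1$, and that is the content the lemma is actually meant to deliver. Your account of where the antecedent comes from — ``by \Cref{cor:zeroloci}, it is precisely the situation in which the antecedent holds'' — is circular: \Cref{cor:zeroloci} is deduced from \Cref{th:gogicgen}, whereas \Cref{le:gogicgen-ideals} is invoked \emph{inside} the proof of \Cref{th:gogicgen} (in the step \Cref{item:th:gogicgen:subhom} $\Rightarrow$ \Cref{item:th:gogicgen:alllp}, to handle the ideals $C_0(U_i)\triangleleft C(\overline{U_i})$). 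At that point one needs the antecedent established independently, and this is where the $G_\delta$ hypothesis enters. Concretely, the paper writes $Z=\bigcap_m U_m$ with $\overline{U_{m+1}}\subseteq U_m$, observes via monotone convergence that $\lambda_m:=\bigl\|\bigl(f_n|_{\overline{U_m}}\bigr)_n\bigr\|_p\to 0$ for any $(f_n)_n\in\ell^p(C_0(U))$, and uses Urysohn to manufacture a single $f\in C_0(U)$, strictly positive on $U$ and vanishing exactly on $Z$, whose decay towards $Z$ is slow enough compared to the $\lambda_m$ that the quotients $g_n:=f_n/f$ again lie in $C_0(U)$ with $(g_n)_n\in\ell^p(C_0(U))$; the factorization $(f_n)=(g_n)\cdot f$ then exhibits $\ell^p$-rank $\le 1$ over $C_0(U)$ itself. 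If your proof were substituted for the paper's, this construction — and with it the non-circular supply of the antecedent — would be missing.
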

\begin{proof}
  Write
  \begin{equation}\label{eq:zint}
    Z=\bigcap_{m\ge 0}U_m
    ,\quad
    \ol{U_{m+1}}\subseteq U_m
    ,\quad
    U_m\subseteq X\text{ open}
  \end{equation}
  and fix $(f_n)_{n\ge 0}\in \ell^p(E)$ for $E:=C_0(U)$. Because for each fixed $n$ the sequence $\left(\|f_n\|_{\ol{U_m}}\right)_m$ descends to $0$ with $m$, the {\it Monotone Convergence Theorem} \cite[\S 21.38]{scht_hndbk} shows that
  \begin{equation}\label{eq:lm}
    \lambda_m:=
    \left\|\left(f_n|_{\ol{U_m}}\right)_n\right\|_p
    \xrightarrow[\quad m\to\infty\quad]{\quad}
    0
  \end{equation}
  Note, next, that for every $k\in \bZ_{\ge 0}$ there is some $m_k$ with
  \begin{equation}\label{eq:fn122k}
    \left\|f_n\right\|_{\ol{U_{m_k}}} < \frac 1{2^{2k}}
    ,\quad
    \forall n\in \bZ_{\ge 0}. 
  \end{equation}
  {\it Urysohn's Lemma} \cite[Theorem 33.1]{mnk} (or rather its proof) implies the existence of a continuous function $Y\xrightarrow{f}\bR_{\ge 0}$ vanishing {\it precisely} along $Z$ (this already uses the fact that $Z$ is $G_{\delta}$) and such that
  \begin{align*}
    f|_{\ol{U_m}}&>\lambda_m
                   ,\quad\forall m
                   \quad\text{and}\numberthis\label{eq:flmbd}
    \\
    f|_{\ol{U_{m_k}}} &> \frac 1{2^k}
                        ,\quad\forall k.\numberthis\label{eq:f12k}
  \end{align*}
  \Cref{eq:fn122k,eq:f12k} show that the functions $g_n:=\frac {f_n}{f}$ on $U$ vanish at $\infty$ thereon and hence extend by 0 across $Z$, while \Cref{eq:flmbd}, by the very definition of $\lambda_m$ in \Cref{eq:lm}, shows that $(g_n)_n\in \ell^p(C_0(U))$. We have now factored $(f_n)_n$ as
  \begin{equation*}
    (f_n) = (g_n)\cdot f
    ,\quad
    (g_n)\in \ell^p(C_0(U))
    ,\quad
    f\in C_0(U),
  \end{equation*}
  hence the first claimed inequality in \Cref{eq:le:gogicgen-ideals:scalarrestr}; the implication, on the other hand, follows from \Cref{re:rksrestrscalar}.
\end{proof}

\pf{th:gogicgen}
\begin{th:gogicgen}
  In part, the argument consists of running through the proofs of \cite[Theorems 1.1 and 3.6]{gog_top-fg} and making appropriate modifications to remove the metrizability assumptions therein. 
  
  \begin{enumerate}[label={},wide]

    
  \item {\bf \Cref{item:th:gogicgen:topfg} $\Rightarrow$ \Cref{item:th:gogicgen:subhom}:} The $F_{\sigma}$ claim follows from \Cref{le:bigdimfsigma} while \cite[Theorem 1.1, (i) $\Rightarrow$ (ii)]{gog_top-fg} (which makes no use of metrizability) settles the rest. 
    
  \item {\bf \Cref{item:th:gogicgen:subhom} $\Rightarrow$ \Cref{item:th:gogicgen:topfg}, \Cref{item:th:gogicgen:alllp}:} By simple direct verification, $\ell^p(\bullet)$ is an {\it exact} endofunctor on the category $\tensor[_A]{\cM}{}$ of non-degenerate Banach $A$-modules in the sense (stronger than what is sometimes \cite[Definition III.3.6]{hlmsk_homolog} meant by the phrase) that for every {\it short exact sequence} \cite[Definition 0.5.1]{hlmsk_homolog}
    \begin{equation*}
      0\to
      X
      \lhook\joinrel\xrightarrow[\quad\text{closed embedding}\quad]{\quad}
      Y
      \xrightarrowdbl{\quad}
      Y/X
      \to
      0
    \end{equation*}
    in $\tensor[_A]{\cM}{}$ the corresponding sequence obtained by substituting $\ell^p(\bullet)$ for $\bullet$ is again exact. For that reason, the desired finite-$\ell^p$-rank property is inherited by the middle term $Y$ of such a sequence (or {\it extension}) from $X$ and $Y/X$; in short: the property of having finite $\ell^p$ rank is preserved by extensions. So too is topological finite generation, so we can treat the two in parallel. 

    The global-section Banach $C(X)$-module $E=\Gamma(\cE)$ can be obtained as an extension:
    \begin{equation}\label{eq:extsect}
      0\to
      \Gamma_0(\cE|_{X_{d}})
      \lhook\joinrel\xrightarrow[]{\quad}
      \Gamma(\cE)
      \xrightarrowdbl{\quad}
      \Gamma(\cE|_{X_{<d}})
      \to
      0,
    \end{equation}
    where
    \begin{itemize}[wide]
    \item $d$ is the maximal fiber dimension of $\cE$;
    \item we write
      \begin{equation*}
        \begin{aligned}
          X_{d}
          &= X_{\cE=d}:=\{x\in X\ |\ \dim\cE_x=d\} \subseteq X\\
          X_{<d}
          &= X_{\cE<d}:=\{x\in X\ |\ \dim\cE_x<d\}
            = X\setminus X_d
        \end{aligned}        
      \end{equation*}
      (consistently with \Cref{eq:xgd});

    \item and $\Gamma_0(-)$ denotes sections vanishing at $\infty$ on the {\it locally} compact Hausdorff space $X_d$ (open in $X$). 
    \end{itemize}
    We proceed inductively on maximal fiber dimension: assuming the conclusion holds for the rightmost term $\Gamma(\cE|_{X_{<d}})$ of \Cref{eq:extsect}, the noted finite-$\ell^p$-rank permanence under extensions reduces the problem to showing that the leftmost term $\Gamma_0(\cE|_{X_{d}})$ has the requisite property.
    
    We are also assuming $X_d$ admits a finite open cover by $U_i$, $1\le i\le n$ trivializing $\cE$: 
    \begin{equation*}
      \cE|_{U_i}\cong\text{trivial bundle with fiber }\bC^d
      ,\quad
      1\le i\le n.
    \end{equation*}
    By \cite[Theorem 20.12(c)]{wil_top} the $F_{\sigma}$ subspace $X_d\subseteq X$ of the compact Hausdorff $X$ is {\it paracompact} \cite[Definition 20.6]{wil_top}, hence \cite[Theorem 41.7]{mnk} continuous functions
    \begin{equation*}
      X_d\xrightarrow{\quad\phi_i\quad}[0,1]
      ,\quad
      \mathrm{supp}~\phi_i
      :=
      \ol{\phi_i^{-1}(\bR_{>0})}
      \subset U_i
      ,\quad
      \sum_{i=1}^n\phi_i(x)=1,\ \forall x\in X
    \end{equation*}
    (i.e. a {\it partition of unity} \cite[Definition preceding Lemma 41.6]{mnk} subordinate to the cover $(U_i)_i$). There are $C(X)$-module maps
    \begin{equation*}
      \begin{tikzpicture}[>=stealth,auto,baseline=(current  bounding  box.center)]
        \path[anchor=base] 
        (0,0) node (l) {$\bigoplus_{i=1}^n\Gamma_0(\cE|_{U_i})$}
        +(6,0) node (r) {$\Gamma_0(\cE|_{X_d})$}
        ;
        \draw[->] (l)
        to[bend left=20] node[pos=.5,auto]
        {$\scriptstyle
          (\sigma_i)_{i=1}^n
          \longmapsto
          \sum_{i=1}^n \sigma_i$}
        (r);
        \draw[->] (r)
        to[bend left=20] node[pos=.5,auto]
        {$\scriptstyle
          (\phi_i\sigma)_i
          \longmapsfrom
          \sigma$}
        (l);
      \end{tikzpicture}
    \end{equation*}
    realizing the right-hand side as a summand of the left. It is thus enough to handle the individual summands
    \begin{equation*}
      \Gamma_0(\cE|_{U_i})
      \cong
      C_0(U_i)^d
      \quad
      \big(\text{by the triviality of the restrictions }\cE|_{U_i}\big). 
    \end{equation*}
    The problem, then, has been reduced to ideals (rather than broader classes of Banach modules): the issue is to deduce \Cref{item:th:gogicgen:topfg} topological finite generation and \Cref{item:th:gogicgen:alllp} the finite-$\ell^p$-rank property for the ideal
    \begin{equation}\label{eq:c0yi}
      C_0(U_i)\triangleleft C\left(\overline{U_i}\right)
      \quad
      \left(\text{$\ol{\bullet}$ meaning closure in $X$}\right),
    \end{equation}
    assuming that $U_i\subseteq \overline{U_i}$ is open $F_{\sigma}$. \Cref{le:gogicgen-ideals} with $Y:=\overline{U_i}$ and $U:=U_i$ handles \Cref{item:th:gogicgen:alllp}. As for \Cref{item:th:gogicgen:topfg}, note that there are continuous functions on $\overline{U_i}$ vanishing {\it precisely} on the closed $G_{\delta}$ subset $\overline{U_i}\setminus U_i$ \cite[\S 33, Exercise 4]{mnk}, and any such function will generate \Cref{eq:c0yi} topologically: the latter claim is an immediate consequence of the classification \cite[Problem 4O]{gj_rings} of the (norm-)closed ideals of $C(Y)$ as precisely
    \begin{equation}\label{eq:0ideal}
      C_{0;Z}(Y):=\{f\in C(Y)\ |\ f|_{Z}\equiv 0\}
      ,\quad
      Z\subseteq Y\text{ closed}.
    \end{equation}
    
    
  \item {\bf \Cref{item:th:gogicgen:alllp} $\Rightarrow$ \Cref{item:th:gogicgen:somelp}} is formal. 
    
  \item {\bf \Cref{item:th:gogicgen:somelp} $\Leftrightarrow$ \Cref{item:th:gogicgen:l1proj} $\Leftrightarrow$ \Cref{item:th:gogicgen:proj}:} \Cref{cor:allfin}.   
    
    
  \item {\bf \Cref{item:th:gogicgen:l1proj} $\Rightarrow$ \Cref{item:th:gogicgen:walgfg}:} This follows from the first inequality in \Cref{eq:rks}: the hypothesis is that the middle term in that inequality is finite, hence so is the leftmost term. 
    
  \item {\bf \Cref{item:th:gogicgen:walgfg} $\Rightarrow$ \Cref{item:th:gogicgen:topfg}:} Once more an implication that transports over directly: \cite[Theorem 3.6, (iv) $\Rightarrow$ (i)]{gog_top-fg} appeals to \cite[Lemma 2.11]{gog_subhom-1}, which in turn requires that the sets $X_{>d}$ be $\sigma$-compact; they are here, by the $F_{\sigma}$ assumption. 
  \end{enumerate}
  This renders the directed implication graph strongly connected.
\end{th:gogicgen}

\begin{remarks}\label{res:notfs}
  \begin{enumerate}[(1),wide]
  \item\label{item:res:notfs:1st2} It is perfectly possible for the first two conditions of \Cref{th:gogicgen}\Cref{item:th:gogicgen:subhom} to hold (for an (F) bundle over compact $X$) without $X_{>d}$ being $F_{\sigma}$. In fact, there is no a priori constraint on those sets at all, beyond being open: let $Y\subseteq X$ be an arbitrary embedding of compact Hausdorff spaces, and set $\cE$ to be the Hilbert bundle over $X$ attached to the Hilbert $C(X)$-module (and ideal) $C_{0;Y}(X)$ in the notation of \Cref{eq:0ideal}. We then have $X_{>0}=X\setminus Y$, an arbitrary open subset of an arbitrary compact Hausdorff space.

    What is more, \Cref{ex:omega} below shows that the $F_{\sigma}$ condition is not automatic in any of the items in \Cref{th:gogicgen} assuming it: \Cref{item:th:gogicgen:subhom} and \Cref{item:th:gogicgen:alllp} to \Cref{item:th:gogicgen:walgfg}. 

  \item\label{item:res:notfs:altpf} The proof of \Cref{th:gogicgen} deduces \Cref{item:th:gogicgen:topfg} and \Cref{item:th:gogicgen:alllp} from \Cref{item:th:gogicgen:subhom} uniformly, using the fact that both properties survive extensions. Alternatively, the implication \Cref{item:th:gogicgen:subhom} $\Rightarrow$ \Cref{item:th:gogicgen:topfg} on its own could have been recovered from the proof of \cite[Theorem 1.1, (ii) $\Rightarrow$ (i)]{gog_top-fg} via \cite[Proposition 3.2, (i) $\Rightarrow$ (ii)]{gog_top-fg}. Metrizability manifests there only in ensuring that the sets $X_{>d}$ (indeed automatically open by \cite[Proposition 1.6]{dupre_hilbund-1}) are {\it $\sigma$-compact} (i.e. \cite[Problem 17I]{wil_top} countable unions of compact subsets). Since they are here assumed $F_{\sigma}$ in the compact Hausdorff $X$, they still are $\sigma$-compact in the context of the proof above.  

  \item\label{item:res:notfs:grothpar} Note the parallels between the equivalence \Cref{item:th:gogicgen:topfg} $\Leftrightarrow$ \Cref{item:th:gogicgen:alllp} of \Cref{th:gogicgen} and the aforementioned (\Cref{re:notlp}) properness of the embedding $\ell^p\wo E\lhook\joinrel\to \ell^p(E)$ for $1<p\le p$ and infinite-dimensional $E$: condition \Cref{item:th:gogicgen:alllp} of \Cref{th:gogicgen} implies the surjectivity of the map
    \begin{equation*}
      \ell^p(A)\otimes_A E
      \xrightarrow{\quad}
      \ell^p(E)
    \end{equation*}
    defined on the {\it algebraic} tensor product (a {\it stronger} assumption than an $A$-relative analogue of $\ell^p\wo E=\ell^p(E)$), and implies the finiteness of \Cref{th:gogicgen}\Cref{item:th:gogicgen:topfg}. 
    
  \item\label{item:res:notfs:noinfty} There are good reasons for excluding the case $p=\infty$ from both \Cref{th:gogicgen}\Cref{item:th:gogicgen:alllp} and \Cref{le:gogicgen-ideals}: not only do those claims not hold, but in fact a strong negation thereof does (\Cref{le:isclopen}). Compare the latter to \cite[Corollary 2.4]{gh_fg-2}, very much in the same spirit: given a {\it Tychonoff space} \cite[Definition 14.8]{wil_top} $X$ with the property that finitely-generated ideals in the ring of continuous real-valued functions on $X$ are principal, points $x\in X$ with countable neighborhood bases are by necessity isolated. 
  \end{enumerate}  
\end{remarks}

\begin{lemma}\label{le:isclopen}
  For an open $F_{\sigma}$ subset $U\subseteq Y$ of a compact Hausdorff space the following conditions are equivalent.

  \begin{enumerate}[(a),wide]
  \item\label{item:le:isclopen:clp} $U$ is clopen (= closed and open) in $X$.

  \item\label{item:le:isclopen:rk1} The $\ell^{\infty}$ rank $\tensor[_{C(Y)}]{\rk}{_{\infty}}(C_0(U))$ is 1.

  \item\label{item:le:isclopen:rkfin} $\tensor[_{C(Y)}]{\rk}{_{\infty}}(C_0(U))<\infty$. 
  \end{enumerate}
\end{lemma}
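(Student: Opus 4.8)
The plan is to prove \Cref{le:isclopen} through the cycle of implications \Cref{item:le:isclopen:clp} $\Rightarrow$ \Cref{item:le:isclopen:rk1} $\Rightarrow$ \Cref{item:le:isclopen:rkfin} $\Rightarrow$ \Cref{item:le:isclopen:clp}, the middle one being trivial. For \Cref{item:le:isclopen:clp} $\Rightarrow$ \Cref{item:le:isclopen:rk1} (harmlessly assuming $U\neq\emptyset$, the module being $0$ otherwise), I would use that a clopen $U$ is compact, so restriction $C(Y)\to C(U)=C_0(U)$ is a surjective unital homomorphism exhibiting $C_0(U)$ as the cyclic $C(Y)$-module generated by its unit; and, $U$ being \emph{open} as well, extension by zero $C(U)\to C(Y)$, $f\mapsto\widetilde f$, is an isometric linear splitting of that restriction. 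Hence any $(f_n)_n\in\ell^\infty(C_0(U))$ lifts to $(\widetilde{f_n})_n\in\ell^\infty(C(Y))$ with $(\widetilde{f_n})_n\cdot 1=(f_n)_n$, so every element of $\ell^\infty(C_0(U))$ has $\ell^\infty$ rank at most $1$; since $C_0(U)\neq 0$, this forces $\rk_\infty(C_0(U))=1$, and \Cref{item:le:isclopen:rk1} $\Rightarrow$ \Cref{item:le:isclopen:rkfin} is immediate.

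The substantive implication is \Cref{item:le:isclopen:rkfin} $\Rightarrow$ \Cref{item:le:isclopen:clp}, which I would establish contrapositively: supposing the open $F_\sigma$ set $U$ is \emph{not} clopen, I want a norm-one element of $\ell^\infty(C_0(U))$ of infinite $\ell^\infty$ rank. Being open but not closed in the compact Hausdorff $Y$, $U$ is not compact; being open and $F_\sigma$ it is also locally compact, Hausdorff and $\sigma$-compact, so I may fix a compact exhaustion $U=\bigcup_{m\ge 0}F_m$ with $F_m\subseteq\mathrm{int}_U F_{m+1}$. Since $U$ is not compact, $F_m\subsetneq U$ for all $m$, so I pick points $x_m\in U\setminus F_m$ --- the relevant feature being that each compact subset of $U$ fails to contain $x_m$ for all large $m$ --- together with bump functions $e_m\in C_0(U)$, $0\le e_m\le 1$, $e_m(x_m)=1$; the candidate is $(e_m)_m\in\ell^\infty(C_0(U))$, of norm $1$.

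The crux --- and the step I expect to need the most care --- is showing that $(e_m)_m$ is not the image of any sum of $K$ elementary tensors from the algebraic tensor product $\ell^\infty(C(Y))\otimes_{C(Y)}C_0(U)$, for any finite $K$. Such a presentation unwinds to elements $w_1,\dots,w_K\in C_0(U)$ and families $(a^{(i)}_n)_n\in\ell^\infty(C(Y))$ --- the uniform bound $M:=\sup_{n,i}\|a^{(i)}_n\|_{C(Y)}<\infty$ encoded in these families lying in $\ell^\infty(C(Y))$ being the crucial leverage --- with $e_n=\sum_{i=1}^K a^{(i)}_n w_i$ in $C_0(U)$ for every $n$. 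Evaluating at $x_n$ and bounding $|a^{(i)}_n(x_n)|\le M$ forces $\max_{1\le i\le K}|w_i(x_n)|\ge (KM)^{-1}=:c>0$ for all $n$; pigeonholing over the finitely many indices then produces a single $w_{i_0}$ with $|w_{i_0}(x_n)|\ge c$ for infinitely many $n$. But $w_{i_0}\in C_0(U)$ makes $\{\,|w_{i_0}|\ge c\,\}$ a compact subset of $U$, hence contained in some $F_{m_0}$, whereas $x_n\notin F_n\supseteq F_{m_0}$ as soon as $n\ge m_0$ --- a contradiction. Apart from this, the only non-formal inputs are the standard facts that an open $F_\sigma$ subset of a compact Hausdorff space is locally compact and $\sigma$-compact (hence admits an interior-nested compact exhaustion) and that a compact subset of such a $U$ lies inside one term of that exhaustion. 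The argument is exactly the $\ell^\infty$-incarnation of Grothendieck's obstruction recalled in \Cref{re:notlp}: a non-clopen $U$ supplies a ``spreading'' sequence $(e_m)$ that no finite-rank tensor can reproduce, which is precisely why $p=\infty$ is excluded from \Cref{th:gogicgen}\Cref{item:th:gogicgen:alllp} and \Cref{le:gogicgen-ideals} (see \Cref{res:notfs}\Cref{item:res:notfs:noinfty}).
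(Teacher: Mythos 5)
Your proof is correct and follows essentially the same route as the paper's: in the substantive implication \Cref{item:le:isclopen:rkfin} $\Rightarrow$ \Cref{item:le:isclopen:clp}, both arguments produce a norm-bounded sequence in $C_0(U)$ that escapes every compact subset of $U$ and derive the contradiction from the uniform bound on the $\ell^{\infty}(C(Y))$ coefficients combined with the fact that the finitely many generators $w_i\in C_0(U)$ are uniformly small off a compact set. The only cosmetic differences are that the paper's witness sequence consists of Urysohn functions adapted to a shrinking neighborhood basis of $Y\setminus U$ (concluding that those neighborhoods stabilize) rather than bumps evaluated at escaping points, and that you additionally spell out the easy direction \Cref{item:le:isclopen:clp} $\Rightarrow$ \Cref{item:le:isclopen:rk1}, which the paper leaves implicit.
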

\begin{proof}
  The interesting implication is \Cref{item:le:isclopen:rkfin} $\Rightarrow$ \Cref{item:le:isclopen:clp}. Exhibit $Z:=Y\setminus U$ once more as an intersection \Cref{eq:zint} and consider (Urysohn \cite[Theorem 33.1]{mnk} again) continuous functions
  \begin{equation}\label{eq:fumfum}
    Y\xrightarrow{\quad f_m\quad}[0,1]
    ,\quad
    f_m|_{\ol{U_{m+1}}}\equiv 0
    ,\quad
    f_m|_{Y\setminus U_m}\equiv 1
    ,\quad
    \forall m\in \bZ_{\ge 0}. 
  \end{equation}
  Condition \Cref{item:le:isclopen:rkfin} means that we can decompose the element $(f_m)\in \ell^{\infty}(C_0(U))$ as a finite linear combination with $C_0(U)$-coefficients of elements in $\ell^{\infty}(C(Y))$:
  \begin{equation*}
    f_m = \sum_{i=1}^s g_{mi}\cdot \varphi_i
    ,\quad
    (g_{mi})_m\in \ell^{\infty}(C(Y))
    ,\quad
    \varphi_i\in C_0(U). 
  \end{equation*}
  There is a uniform upper bound on $\|g_{mi}\|$ (i.e. independent of $m$ and $i$). Coupled with the fact that $\varphi_i$, $1\le i\le s$ are all small on $U_n$ for $n$ sufficiently large, this shows that so too are $f_m$. But this then implies (in conjunction with \Cref{eq:fumfum}) that the $U_m$ stabilize for large $m$: clopenness, in other words. 
\end{proof}

As indicated in \Cref{res:notfs}\Cref{item:res:notfs:1st2}, the following simple example shows that the $F_{\sigma}$ assumption cannot be dropped in any of the conditions of \Cref{th:gogicgen} which assume it. 

\begin{example}\label{ex:omega}
  Take for $X$ the compact space $[0,\Omega]$ of \cite[\S II.43]{ss_countertop}: all ordinals less than or equal to the first uncountable one, $\Omega$, with the {\it order topology} (\cite[\S II.39]{ss_countertop}, \cite[\S 14]{mnk}, etc.). The Hilbert bundle will be that attached to the Hilbert $C(X)$-module $C_0(U)$, $U:=[0,\infty)$; it is trivial of rank 1 over $U$ and vanishes at $\Omega$. In particular, $X_{>0}=U$ is {\it not} $F_{\sigma}$ \cite[\S II.39, item 10]{ss_countertop}. 

  Every element of $C_0(U)$ in fact vanishes on an entire neighborhood of $\Omega$ \cite[\S II.43, item 12]{ss_countertop}, so every element of $\ell^p(C_0(U))$ also belongs to some $\ell^p(C_0([0,\alpha)))$ for some (countable) ordinal $\alpha>\omega$. An application of \Cref{le:gogicgen-ideals} to the space $G_{\delta}$ subspace $\{\alpha\}\subset [0,\alpha]$ then shows that the $\ell^p$ ranks are all $1$, hence the second condition in \Cref{th:gogicgen}\Cref{item:th:gogicgen:alllp}. Analogous arguments apply for the other conditions involving the $F_{\sigma}$ requirement. 
\end{example}


\addcontentsline{toc}{section}{References}

\Addresses

\end{document}